\documentclass[11pt]{article}

\oddsidemargin -.5 true cm
\evensidemargin -.5 true cm
\marginparwidth 0.75 in
\textwidth 6.5 true in
\textheight 22 true cm
\topmargin -.5 true cm
\headsep1.0cm
\footskip1.0cm

\usepackage[T1]{fontenc}
\usepackage{newtxtext, newtxmath}

\usepackage[bottom]{footmisc} 
\usepackage{amsthm, amsmath, amsfonts, amscd} 
\usepackage{thmtools, thm-restate}
\usepackage{tikz,tikz-cd,pgf, graphicx, pgfplots, float}
\usepackage[hypcap=false]{caption}
\usepackage{tabu, multirow, tabulary}
\usepackage{enumerate} 
\usepackage[shortlabels]{enumitem}
\usepackage{indentfirst} 
\frenchspacing 
\usepackage{xspace} 
\usepackage[cal=boondoxo]{mathalfa}
\usepackage{hyperref} 

\hypersetup{pdfpagemode=UseNone} 


\declaretheorem[name=Theorem,parent=section]{theorem}
\declaretheorem[numbered=no,name=Theorem]{theorem*}

\declaretheorem[name=Corollary,sibling=theorem]{corollary}
\declaretheorem[numbered=no,name=Corollary]{corollary*}

\declaretheorem[name=Claim]{claim}

\declaretheorem[name=Lemma,sibling=theorem]{lemma}
\declaretheorem[numbered=no,name=Lemma]{lemma*}

\declaretheorem[name=Proposition,sibling=theorem]{proposition}
\declaretheorem[numbered=no,name=Proposition]{proposition*}

\declaretheorem[name=Definition, style=definition, sibling=theorem]{definition}

\newcommand{\Mod}[1]{\ (\mathrm{mod}\ #1)}

\def\N{\ensuremath{\mathbb N}\xspace} 
\def\R{\ensuremath{\mathbb R}\xspace} 
\def\F{\ensuremath{\mathcal F}\xspace} 
\def\T{\ensuremath{\mathcal t}\xspace} 

\def\Frect{\ensuremath{\mathcal{F}_{\sqcap}}\xspace} 
\def\Fhang{\ensuremath{\mathcal{F}_{\mathrm{hanging}}}\xspace} 
\def\Fint{\ensuremath{\mathcal{F}_{\mathrm{int}}}\xspace} 
\def\Funit{\ensuremath{\mathcal{F}_{\mathrm{unit}}}\xspace} 


\title{%
  Colouring bottomless rectangles and arborescences%
}

\author{%
  Jean Cardinal,
  Kolja Knauer,
  Piotr Micek,\\
  D\"om\"ot\"or P\'alv\"olgyi,
  Torsten Ueckerdt,
  Narmada Varadarajan
}


\tikzstyle{vertex} = [fill,shape=circle,node distance=40pt, inner sep=0pt,minimum size=3pt]
\tikzstyle{edge} = [fill,opacity=.2,fill opacity=.2,line cap=round, line join=round, line width=10pt]
\tikzstyle{elabel} =  [fill,shape=circle,node distance=30pt]
\pgfdeclarelayer{background}
\pgfsetlayers{background,main}
\pgfplotsset{compat=1.6}

\begin{document}

\maketitle

\begin{abstract}

We study problems related to colouring bottomless rectangles.
One of our main results shows that for any positive integers $m, k$, there is no semi-online algorithm that can $k$-colour bottomless rectangles with disjoint boundaries in increasing order of their top sides, so that any $m$-fold covered point is covered by at least two colours.
This is, surprisingly, a corollary of a stronger result for arborescence colourings.
Any semi-online colouring algorithm that colours an arborescence in leaf-to-root order with a bounded number of colours produces arbitrarily long monochromatic paths.
This is complemented by optimal upper bounds given by simple online colouring algorithms from other directions.

Our other main results study configurations of bottomless rectangles in an attempt to improve the \textit{polychromatic $k$-colouring number}, $m_k^*$.
We show that for many families of bottomless rectangles, such as unit-width bottomless rectangles, $m_k^*$ is linear in $k$.
We also present an improved lower bound for general families: $m_k^* \geq 2k-1$.
\end{abstract}	

\section{Introduction}
The systematic study of polychromatic colourings and cover-decomposition of geometric ranges was initiated by Pach over 30 years ago \cite{P80,P86}.
The field has gained popularity in the new millennium, with several breakthrough results; for a (slightly outdated) survey, see \cite{survey}, or see the up-to-date interactive webpage \url{http://coge.elte.hu/cogezoo.html} (maintained by Keszegh and P\'alv\"olgyi).

A family of geometric regions $\F$ and a point set $P$ in some $\R^d$ naturally define a primal hypergraph, $H(P, \F)$.
The vertex set of $H$ is the points of $P$, where $\F' \subset \F$ is an edge if for some $p \in P$, $p$ is covered by exactly the regions in $\F'$.
We are interested in the dual hypergraph, $H(\F, P)$, on the vertex set $\F$, where $p \in P$ is an edge if for some $\F' \subset \F$, $p$ is covered by exactly the regions in $\F'$.

\begin{figure}[H]
    \centering{
    \begin{tikzpicture}[scale=1.2]
    	\node[circle, fill=black,inner sep=0pt,minimum size=3pt] at (0,0) {};
	
	\node[circle, fill=black,inner sep=0pt,minimum size=3pt] at (1,1) {};

	\node[circle, fill=black,inner sep=0pt,minimum size=3pt] at (1.5,0.5) {};
	
	\draw (-0.5,0) circle (0.75cm);
	
	\draw (0.5,0.5) circle (0.8cm);
	
	\draw (1.5,0.75) circle (1cm);
    
    \node[left] at (-1.25,0) {\footnotesize $C_1$};
    
    \node[above] at (0.3,1.2) {\footnotesize $C_2$};
   
    \node[right] at (2.5,0.75) {\footnotesize $C_3$};

    \end{tikzpicture}
    \hspace{0.3cm}
      \begin{tikzpicture}
    \node[vertex,label=above:\( C_1 \)] (C1) {};
    \node[vertex, right of=C1, label=above:\( C_2 \)] (C2) {};
    \node[vertex, below of=C2, label=left:\( C_3 \)] (C3) {};
    \node[above of=C3] (C4) {};
    \node[below of=C4] (C5) {};
    \begin{pgfonlayer}{background}
    \draw[edge, color=blue, opacity = 0.2] (C1)--(C2);
    \draw[edge,color=blue, opacity = 0.2] (C3) -- (C5);
    \draw[edge, color=blue, opacity = 0.2] (C2)--(C3);
    \end{pgfonlayer}
    \end{tikzpicture}
 
    \caption{\small{A family of circles, a finite point set, and the corresponding dual hypergraph.}}
    }
    \label{fig:circles}
\end{figure}
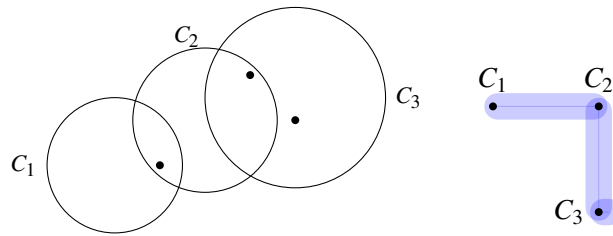

We can then define the chromatic number $\chi_{\F}$ of any family $\F$ of geometric regions. 
This is the minimum number of colours needed to colour any \textit{finite} point set $P$ so that any region containing at least two points contains at least two colours.
The dual chromatic number, $\chi_{\F}^*$, is defined analogously as the number of colours needed to properly colour any \textit{finite} subfamily of $\F$.
In this paper, we will study the \textit{polychromatic colouring numbers}.
\begin{definition}
The $k$-th (primal) \textit{polychromatic colouring number} $m_k(\F)$ is the smallest number needed to $k$-colour any finite point set $P$ so that any region containing at least $m_k(\F)$ points contains all $k$ colours.

Its dual $m_k^*(\F)$ is the smallest number needed to $k$-colour any finite subfamily of $\F$ so that any point covered by $m_k^*$ regions is covered by all $k$ colours. 
\end{definition}
The polychromatic colouring problem is partly motivated by the sensor cover problem; given a set of sensors covering an area, can we decompose them into $k$ sets so that any area covered by $m_k^*$ sensors is covered in each of these sets?
When $k=2$, this is called the cover-decomposability problem.
In particular, we say a set $P \subset \R^2$ is \textit{cover-decomposable} if $m_2^*(\F_P) < \infty$, where $\F_P$ is the family of all \textit{translates} of $P$.
In \cite{P86} it was shown that every centrally-symmetric open convex polygon is cover-decomposable, and this was extended to all open convex polygons in \cite{PT10}.
The bound $m_k^*(\F_P) = O(k)$ was proved for any convex polygon $P$ in \cite{GV09}.

The problem becomes more complicated if we consider \textit{homothets} of a convex polygon.
For example, if $\F_{\square}$  
denotes the family of axis-parallel squares in the plane, $m_2(\F_{\square} ) \leq 215$ \cite{AKV}. 
On the other hand, $m_2^*(\F_{\square} ) =\infty$, that is, for any number $m$ there is a family $\F_m$ of axis-parallel squares such that (1) each point in the plane is covered by at least $m$ squares, but (2) any $2$-colouring of $\F_m$ produces a point covered by squares of exactly one colour \cite{K13}.
Furthermore, if $\F_{\sqsubset\!\sqsupset}$ denotes the family of axis-parallel rectangles in the plane, $m_2(\F_{\sqsubset\!\sqsupset}) = \infty$ \cite{CPST}. 
Consequently, $m_k(\F_{\sqsubset\!\sqsupset}) = \infty$ for any $k$.
The dual $m_k^*(\F_{\sqsubset\!\sqsupset})$ is infinite as well; there is a constant $C>0$ such that for any numbers $n \geq r \geq 2$, there is a family of $n$ axis-parallel rectangles for which any colouring with at most $C\log n (r\log r)^{-1}$ colours produces a point covered by $r$ monochromatic axis-parallel rectangles\cite{PT08}.

This paper focuses on one particular family: \textit{bottomless rectangles}.
\begin{definition}
A subset of $\R^2$ is called a (closed) bottomless rectangle if it is of the form $\{(x,y): l \leq x \leq r, y \leq t \}$. 
We simply refer to a bottomless rectangle by these paramaters $(l,r,t)$.
\end{definition}
These range spaces were first defined by Keszegh \cite{Khalf}, who showed $m_2=4$ and $m_2^*=3$.
Later Asinowski et al.~\cite{A+13} showed that for any positive integer $k$, any finite set of points in $\R^2$ can be $k$-coloured such that any bottomless rectangle with at least $3k-2$ points contains all $k$ colors.
They also showed that the optimal number that can be written in place of $3k-2$ in the above statement is at least $1.67k$.
In our language, if $\F_{\sqcap}$ denotes the family of all bottomless rectangles in the plane, $1.67 k \leq m_k(\F_{\sqcap}) \leq 3k-2$.

Our paper studies the dual problem: we would like to determine the optimal $m_k^*(\F_{\sqcap})$.
About this question much less is known; $m_2^* = 3$ \cite{Khalf}, while the best general upper bound is $m_k^*=O(k^{5.09})$, a corollary of a more general result \cite{CKMU} about \emph{octants} (combined with an improvement of the base case \cite{octantnine} that slightly lowered the exponent).
The general conjecture, however, is that $m_k^*=O(k)$ for any family for which $m_2^*$ is finite \cite{survey}.
It was also proved in \cite{CKMU} that there is no semi-online algorithm ``from above'' for colouring bottomless rectangles.
One of our main results is a generalisation of this negative statement.

\begin{restatable}{theorem}{thmrect}\label{thm:rect}
For any $k$ and $m$, and any semi-online algorithm that $k$-colours bottomless rectangles from below (resp.\ from above, from the right, or from the left), there is a family of bottomless rectangles such that the algorithm will produce an $m$-fold covered point that is covered by at most one colour.
\end{restatable}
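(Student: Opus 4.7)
The plan is to derive Theorem~\ref{thm:rect} from the stronger arborescence statement advertised in the abstract: any semi-online $k$-colouring of an arborescence in leaf-to-root order produces arbitrarily long monochromatic ancestor paths. I would first prove this tree statement and then pull it back to bottomless rectangles via a geometric realization.

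For the arborescence lemma I would use an adaptive adversary that grows the tree in leaf-to-root order, inducting on the target path length $m$. The base case is a single vertex. For the inductive step, run many independent copies of the phase-$m$ adversary in parallel, each producing a currently unparented ``top'' that is the upper endpoint of a monochromatic descending path of length $m$. The natural next move is to attach a common parent to two tops sharing a colour, but the algorithm can refuse to recolour the parent in that colour, so one has to strengthen the inductive invariant: for \emph{every} colour $c$, there should be a top exhibiting a length-$m$ monochromatic chain in colour $c$. Then any parent attached to the corresponding $k$ tops is forced, regardless of the algorithm's choice, to extend one of the $k$ chains to length $m+1$. Producing tops of all $k$ colours is the genuine technical obstacle, since the algorithm can unilaterally suppress any subset of colours. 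I would defeat this either by running enough sub-adversaries and pigeonholing on colour frequencies, or by a secondary induction on the number of colours the algorithm actually uses: if some colour is never used, apply the result for the smaller palette.

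Granted the arborescence lemma, the ``from below'' direction of the theorem follows by realizing the adversary's tree $T$ as bottomless rectangles. Choose a laminar family of closed intervals $\{I_v\}_{v\in V(T)}$ in which sibling intervals are disjoint and an ancestor's interval strictly contains those of its descendants; this is routine for any finite rooted tree. Pick top-sides $t_v$ so that $t_v<t_u$ whenever $u$ is a strict ancestor of $v$, and so that the linear order of the $t_v$ agrees with the leaf-to-root order fed to the arborescence algorithm. Setting $R_v=I_v\times(-\infty,t_v]$ turns every ancestor chain of $T$ into a nested chain of bottomless rectangles with nonempty common intersection. A monochromatic length-$m$ chain in $T$ therefore produces an $m$-fold covered point covered by at most one colour, which is exactly what the theorem requires.

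The remaining three directions are handled by analogous reductions. ``From above'' uses the root-to-leaf version of the arborescence lemma, which is proved by a mirror argument. ``From the left'' and ``from the right'' encode the processing order through the $l$ or $r$ coordinate rather than through $t$, while ancestor nesting is enforced by the remaining parameters; the same laminar construction works after reshuffling the roles of the coordinates. The main obstacle throughout is the arborescence step, and specifically guaranteeing that all $k$ colours surface among the inductively produced tops; once that is settled, everything else is careful but routine bookkeeping.
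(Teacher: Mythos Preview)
Your reduction for ``from below'' via nested rectangles is correct and matches the paper, but the plan breaks down for the other three directions. The arborescence problem is \emph{not} symmetric: in root-to-leaf order there is a trivial online algorithm (colour each vertex by its depth modulo $k$), so there is no ``root-to-leaf version of the arborescence lemma'' to invoke, and no mirror argument can produce one. Consequently you cannot handle ``from above'' this way. The paper's fix is to change the geometric realization rather than the direction of the tree lemma: realize the arborescence as a family of \emph{towers} (the root is the lowest, widest rectangle; leaves are tall and narrow), so that processing from above still presents the vertices in leaf-to-root order. Your laminar construction likewise fails for ``from the left'' and ``from the right'': with nested intervals the root has the extremal $l$ and $r$ coordinates and is therefore presented first in either horizontal sweep, again yielding the easy root-to-leaf order. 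The paper instead realizes the arborescence with increasing (resp.\ decreasing) steps for these directions, a genuinely different construction rather than a coordinate reshuffle. So it is not true that ``everything else is careful but routine bookkeeping'' once the tree lemma is in hand.

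Your sketch of the arborescence lemma itself also has a gap you flag but do not close. The strengthened invariant --- a fully-coloured monochromatic length-$m$ chain in \emph{every} colour --- is not obviously attainable by pigeonholing or by a secondary induction on the palette: a semi-online algorithm may leave vertices uncoloured indefinitely, colouring them only when a length-$m$ path forces its hand, and can thereby avoid ever completing a monochromatic chain in a given colour even while occasionally using that colour elsewhere. The paper abandons induction on $m$ altogether and uses a diagonalisation: it assigns to each current root a bounded ``type'' (the trimmed isomorphism class of its subtree), encodes the sequence of root-types as a bounded word, and shows that the adversary can always strictly increase this word in a well-ordering, yielding a contradiction.
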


Our proof is much more complicated than the one in \cite{CKMU}; while there an Erd\H os-Szekeres \cite{ESz} type incremental argument is used, we need a certain diagonalisation method.
In particular, we reduce the semi-online bottomless rectangle colouring problem to a question about semi-online colourings of arborescences, which is interesting in its own right.

\begin{restatable}{theorem}{thmarbone}\label{thm:arb1}
For any $k$ and $m$, and any semi-online colouring algorithm that $k$-colours the vertices of an arborescence in a leaf-to-root order, there is an arborescence that has a leaf-to-root order such that the algorithm will produce a directed path of length $m$ that contains at most one colour.
\end{restatable}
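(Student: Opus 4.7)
The plan is to proceed by induction on $m$. The base case $m=1$ is trivial: the adversary reveals a single leaf, and whatever colour the algorithm assigns already gives a monochromatic path of length $1$. For the inductive step I would strengthen the hypothesis slightly to: against any semi-online algorithm, the adversary can build an arborescence whose monochromatic path of length $m$ ends at the most recently revealed vertex, which then remains available as a ``live root'' to be extended or combined with other live roots in later steps. This strengthened form is what the recursion actually needs.

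Given such a strategy $S_m$, the natural plan for $S_{m+1}$ is to accumulate $k$ live roots, produced by independent invocations of $S_m$ on disjoint fresh subtrees, such that the colours the algorithm assigns to these $k$ roots cover all of $\{1,\ldots,k\}$. Once this is achieved, the adversary combines the $k$ roots as children of one new vertex $v$; whichever of the $k$ colours the algorithm assigns to $v$ matches one of the accumulated roots, extending that root's monochromatic path from length $m$ to length $m+1$ at $v$. The difficulty is that the algorithm fully controls the colours and can always funnel every $S_m$-root to a single colour, so the naive ``run $S_m$ a few times'' approach fails.

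This is where the diagonalisation enters. Whenever some colour $c$ has produced at least $k$ live roots in the pool, the adversary combines any $k$ of them as children of a new vertex $w$. If the algorithm colours $w$ with $c$, the desired path of length $m+1$ appears immediately; otherwise the algorithm is forced to pick some $c' \ne c$, and $w$ becomes a live root of a colour that was previously missing from the pool. This is a genuine enlargement of the colour set, but at the cost that the ``length-$m$'' status at $w$ is reset back to length $1$. The adversary must then re-boost this new root of colour $c'$ up to length $m$ via a nested invocation of the same machinery, which may in turn spawn further re-boosts.

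The main obstacle I expect is controlling this nested recursion, since each re-boost may trigger further re-boosts at different colours. A careful potential argument---for instance, tracking the lexicographic pair $\bigl(k - \#\{\text{colours with a length-}m\text{ root in the pool}\},\ \text{current length-deficit of the working root}\bigr)$---should show that every round strictly decreases this potential, so the construction terminates after a tower-type number of operations in $k$ and $m$. This fits the authors' remark that a pure Erd\H os--Szekeres incremental argument as in \cite{CKMU} is insufficient and a genuine diagonalisation is required.
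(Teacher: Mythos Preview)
Your proposal has a real gap: it silently treats the algorithm as \emph{online}, i.e., as if every presented vertex is coloured immediately. The theorem, however, is about \emph{semi-online} algorithms, which may leave a newly presented vertex uncoloured and may, at any later step, colour previously uncoloured vertices for the first time. This undermines your argument in two places. First, phrases such as ``whatever colour the algorithm assigns to $v$'' and ``the algorithm is forced to pick some $c'\neq c$'' presuppose that $v$ receives a colour at all; a semi-online algorithm can simply decline, so your live roots need not carry any colour and the base case already fails in the strengthened form you need. Second, and more seriously, your inductive step stockpiles roots $r_1,r_2,\ldots$, each currently sitting at the bottom of a length-$m$ path with at most one colour, and then continues presenting vertices in fresh subtrees before eventually combining the stored roots under a new vertex. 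During that interim the algorithm is free to go back and colour hitherto uncoloured vertices on the paths above the $r_i$, inserting a second colour and destroying those bad paths. By the time you combine, none of the stored roots need be usable. Your potential only tracks the colour of each root, not the mutable coloured/uncoloured pattern of the whole path above it, so it cannot see this erosion.

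This is precisely the difficulty the paper's proof is engineered around, and it is why the paper does \emph{not} induct on $m$. Instead it fixes $m,k$, and to each new root $p$ associates a bounded-depth partially coloured tree $\T(p)$ recording which ancestors are coloured, with which colour, and which remain uncoloured (Lemma~\ref{lem:trim}). The key Lemma~\ref{lem:descent} shows that placing a new root $p$ beneath roots with pairwise non-isomorphic types yields a genuinely new type; its proof explicitly accounts for the algorithm colouring old vertices at the moment $p$ appears. Finiteness of the set of possible types then closes the argument by infinite descent on the associated sequences. Your pigeonhole on a single root colour is the right intuition for the purely online setting, but for semi-online the invariant that must survive between stages is the entire type $\T(p)$, not a single colour, and that is what your plan is missing.
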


We apply this theorem to four natural configurations of bottomless rectangles to show that for each configuration, there is a direction from which a semi-online algorithm fails.
Furthermore, we show in \autoref{thm:noshallowhittingset} that bottomless rectangles do not admit \textit{shallow hitting sets}, which are another standard tool to bound polychromatic colouring numbers.

These negative results are complemented by optimal upper bounds given by online algorithms from the other directions.
We obtain linear bounds for $m^*_k$ for the following families of bottomless rectangles:
\begin{itemize}
    \item $m^*_k(\Funit) \leq 2k-1$ for \textit{unit-width} families, \autoref{prop:stepspt},
    \item $m^*_k(\Fhang)\leq 2k-1$ for \textit{hanging} families, \autoref{thm:hanging}, and
    \item $m^*_k(\Fint)\leq 3k$ for \textit{intersecting} families. \autoref{thm:intersecting},
\end{itemize}

We also improve the current lower bound for $m^*_k$, by showing $m^*_k \geq 2k-1$ (\autoref{thm:lb}).


In \autoref{sec:configs}, we introduce certain configurations of bottomless rectangles, and define the corresponding colouring problem.
We also prove our other main results: that for many families of bottomless rectangles, such as \textit{unit-width}\footnote{See \autoref{sec:unitwidth}} and \textit{intersecting}\footnote{See \autoref{sec:other}} families, $m_k^*$ is linear.
In \autoref{sec:arb}, we prove \autoref{thm:arb1} and deduce \autoref{thm:rect} as a corollary of it.
In \autoref{sec:further}, we look at other methods to improve the upper bound on $m_k^*$, and improve the lower bound for general families to $m_k^* \geq 2k-1$.

\section{Configurations of bottomless rectangles}\label{sec:configs}

\subsection{\texorpdfstring{Erd{\H o}s}{Erdos}-Szekeres configurations}\label{sec:esconfigs}

We would like to improve the upper bound $m_k^*(\Frect) = O(k^{5.09})$ for general families by classifying some configurations of bottomless rectangles, finding a colouring for each configuration, and combining these to obtain a good colouring for general families.
To this end, we will use the classical result of Erd{\H o}s and Szekeres \cite{ESz} that any sequence of length $(k-1)^2+1$ contains a monotone subsequence of length $k$.

Recall that we associated to each rectangle its parameters $(l,r,t)$. We refer to $l$ as its left-coordinate, $r$ its right-coordinate, and $t$ its height.
Let $p$ be a point covered by $(m-1)^4+1$ rectangles.
Ordering these rectangles by left endpoint, we find a subsequence of length $(m-1)^2+1$ whose right endpoints are monotone.
Applying the result of  Erd{\H o}s and Szekeres again, we find a (sub)subsequence of length $m$ whose heights are monotone.
This proves that any point that is contained in $(m-1)^4 +1$ bottomless rectangles is contained in $m$ bottomless rectangles such that each of the three parameters of these $m$ bottomless rectangles are in increasing or decreasing order.
We name these configurations, respectively, \emph{increasing/decreasing steps, towers} and \emph{nested rectangles} (see \autoref{fig:configs}).

We are interested in colouring families with respect to a \textit{fixed} configuration.
For example, can we $k$-colour a finite family $\F$ so that any point covered by an $m$-tower is covered by all $k$ colours?
We refer to least such $m$ as $m_k^*$ \textit{for towers}, and analogously for the other configurations.
\begin{theorem}\label{thm:configs}
$m_k^* = k$ for each fixed configuration.
\end{theorem}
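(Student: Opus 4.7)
For the lower bound, $m_k^* \ge k$ is immediate: a point covered by strictly fewer than $k$ rectangles cannot receive all $k$ colors.

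For the upper bound, the plan is to associate to each configuration $C$ a strict partial order $\prec_C$ on $\F$ encoding the configuration's coordinate-wise monotonicity — set containment for nested rectangles; the order $l$ inc, $r$ dec, $t$ inc for towers; and analogously for increasing and decreasing steps — so that a $k$-configuration is precisely a $\prec_C$-chain of length $k$ with a common point. For each $R \in \F$, define $\nu(R)$ as the length of the longest $\prec_C$-chain in $\F$ starting at $R$, and color $R$ with $c(R) = \nu(R) \bmod k$. A standard telescoping argument — using that $Y_{j-1} \prec_C Y_j$ implies $\nu(Y_{j-1}) \ge \nu(Y_j) + 1$ by prepending $Y_{j-1}$ to a maximizing chain at $Y_j$ — shows that any maximizing chain $R = Y_1 \prec_C Y_2 \prec_C \cdots \prec_C Y_{\nu(R)}$ satisfies $\nu(Y_j) = \nu(R) - j + 1$. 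Consequently, the colors $c(Y_j)$ take every value in $\{0, 1, \ldots, k-1\}$ whenever $\nu(R) \ge k$.

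For nested rectangles this runs cleanly: given a $k$-configuration $R_1 \supsetneq R_2 \supsetneq \cdots \supsetneq R_k$ with $p \in R_k$, take $R = R_k$ (the $\prec$-minimum of the chain). Then $\nu(R_k) \ge k$, and since every element $Y_j$ of a canonical maximizing chain satisfies $Y_j \supseteq R_k \ni p$, the point $p$ is covered by rectangles realizing all $k$ colors. The critical geometric feature is that, for the nested partial order, extension of a chain from the smallest rectangle automatically produces strictly larger rectangles that contain $R_k$, hence $p$.

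For the remaining three configurations (towers, increasing steps, decreasing steps), the same modular coloring should work, but the verification that canonical chain elements cover $p$ becomes more delicate because $\prec_C$ no longer gives set containment. I expect the main obstacle to be, for each configuration, picking the correct extreme element of the $k$-configuration from which to extend the canonical chain and verifying — using the explicit description of the common-point region (e.g., for towers a chain $R_1 \prec_T \cdots \prec_T R_k$ has common region $[l_k, r_k] \times (-\infty, t_1]$) — that chain extensions in the appropriate direction of $\prec_C$ preserve membership of $p$. Where the $\nu$-invariant fails to do so, one can instead use the symmetric invariant $\mu(R)$ (longest chain ending at $R$) with the opposite extreme element, and the configuration-specific monotonicity pattern then dictates which choice is correct; in all cases the canonical chain's length is at least $k$, so the modular argument again forces all $k$ colors on $p$.
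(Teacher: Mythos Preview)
Your argument is complete and correct for nested rectangles, but it breaks down for towers and for both kinds of steps. The difficulty is exactly the one you flag and then dismiss too quickly: for these three configurations, chain extensions in $\prec_C$ need not contain the point $p$, and this happens in \emph{both} directions, so neither $\nu$ nor $\mu$ yields a valid colouring.

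Take towers with $k=2$. Let
\[
A_1=[0,10]\times(-\infty,1],\quad A_2=[1,5]\times(-\infty,2],\quad C_1=[6,9]\times(-\infty,2],\quad C_2=[7,8]\times(-\infty,3].
\]
Then $A_1\prec_T A_2$, $A_1\prec_T C_1\prec_T C_2$, while $A_2$ is $\prec_T$-incomparable to $C_1,C_2$. Hence $\nu(A_1)=3$ and $\nu(A_2)=1$, so $c(A_1)=c(A_2)=1$; the point $p=(2,\tfrac12)$ lies in the $2$-tower $(A_1,A_2)$ and in nothing else. Your $\nu$-colouring misses a colour at $p$. The symmetric invariant $\mu$ fails just as easily: with
\[
A_1=[0,10]\times(-\infty,5],\quad A_2=[4,6]\times(-\infty,6],\quad D=[2,8]\times(-\infty,3],\quad D'=[1,9]\times(-\infty,2],
\]
we have $D'\prec_T D\prec_T A_2$ and $A_1\prec_T A_2$, but $A_1$ is incomparable to $D,D'$, giving $\mu(A_1)=1$ and $\mu(A_2)=3$; the point $p=(5,4)$ is covered only by $A_1,A_2$, both of colour $1$. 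The same phenomenon occurs for increasing and decreasing steps: moving up in $\prec_C$ shifts one side of the $x$-interval past $p$, and moving down lowers $t$ past $p$, so no choice of extreme end is safe.

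The paper's proof avoids this entirely. It does not use a static chain-length invariant; instead it runs a greedy online algorithm from a well-chosen direction (e.g.\ towers from below), maintaining the invariant that every point in a $j$-configuration already sees $j$ distinct colours. The greedy step assigns to the new rectangle the colour whose current ``coverage height'' is smallest, which is precisely what forces the invariant to persist. Alternatively, one can check $m_2^*=2$ for each configuration and invoke Berge's theorem, but your chain-rank approach, as stated, does not go through.
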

A result of Berge \cite{Berge} shows that for any family $\F$ of geometric regions, $m_2^*(\F) = 2$ if and only if $m_k^*(\F) = k$ for all $k$.
It is not hard to show that $m_2^* = 2$ for each configuration, and then apply the result of Berge.
Nevertheless, it is valuable to see that there is a simple online algorithm for each configuration.

\begin{figure}[H]
	\centering{	
		\begin{tikzpicture} [y=0.4 cm, x=0.5cm]
		\draw (0,-2)--(0,0) -- (4,0)--(4,-2);
		
		\draw (1,-2)--(1,1) -- (5,1)--(5,-2);
		
		\draw[dotted] (1.5,-2)--(1.5,2) -- (5.5, 2)--(5.5,-2);
		
		\draw (2,-2)--(2,3) -- (6,3)--(6,-2);
		
		\node[left] at (0,0) {\footnotesize $R_1$};
		\node[left] at (1,1) {\footnotesize $R_2$};
		\node[left] at (2,3) {\footnotesize $R_m$};
		
		\node[align=center, below of=1cm,]at (3,-1) {\footnotesize \textit{increasing m-steps}} ;
		
		\end{tikzpicture}
		\hspace{0.3cm}
		\begin{tikzpicture} [y=0.4 cm, x=0.5cm]
		\draw (0,-2)--(0,3) -- (4,3)--(4,-2);
		
		\draw (1,-2)--(1,2) -- (5,2)--(5,-2);
		
		\draw[dotted] (1.5,-2)--(1.5,1) -- (5.5, 1)--(5.5,-2);
		
		\draw (2,-2)--(2,0) -- (6,0)--(6,-2);
		
		\node[left] at (0,3) {\footnotesize $R_1$};
		\node[left] at (1,2) {\footnotesize $R_2$};
		\node[left] at (2,0) {\footnotesize $R_m$};
		
		\node[align=center, below of=1cm,]at (3,-1) {\footnotesize \textit{decreasing m-steps}} ;
		
		\end{tikzpicture}
		
		\hspace{0.3cm}
		\begin{tikzpicture} [y=0.4 cm, x=0.5cm]
		\draw (0,-2)--(0,0) -- (6,0)--(6,-2);
		
		\draw (1,-2)--(1,1) -- (5,1)--(5,-2);
		
		\draw[dotted] (1.5,-2)--(1.5,2) -- (4.5, 2)--(4.5,-2);

		\draw (2,-2)--(2,3) -- (4,3)--(4,-2);
		
		\node[left] at (0,0) {\footnotesize $R_1$};
		\node[left] at (1,1) {\footnotesize $R_2$};
		\node[left] at (2,3) {\footnotesize $R_m$};
		
		\node[align=center, below of=1cm,] at (3,-1) {\footnotesize \textit{m-tower}} ;

		\end{tikzpicture}	
		\hspace{0.3cm}
		\begin{tikzpicture} [y=0.4 cm, x=0.5cm]
		\draw (0,-2)--(0,3) -- (6,3)--(6,-2);
		
		\draw (1,-2)--(1,2) -- (5,2)--(5,-2);
		
		\draw[dotted] (1.5,-2)--(1.5,1) -- (4.5, 1)--(4.5,-2);

		\draw (2,-2)--(2,0) -- (4,0)--(4,-2);
		
		\node[left] at (0,3) {\footnotesize $R_1$};
		\node[left] at (1,2) {\footnotesize $R_2$};
		\node[left] at (2,0) {\footnotesize $R_m$};
		
		\node[align=center, below of=1cm,] at (3,-1) {\footnotesize \textit{m-nested rectangles}} ;

		\end{tikzpicture}
		\caption{\small{Erd{\H o}s-Szekeres configurations}}\label{fig:configs}	
	}	
\end{figure}
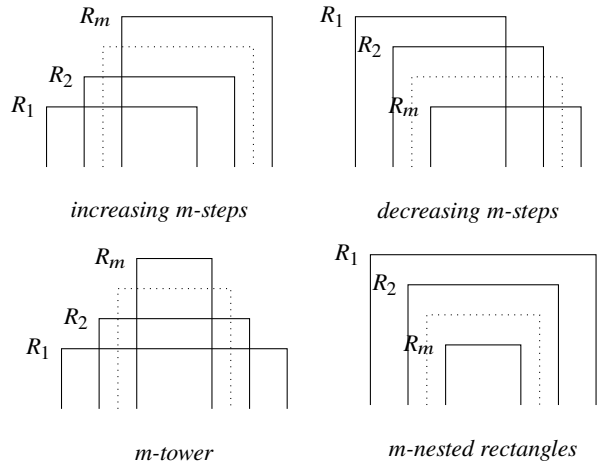

\begin{definition}
In this paper, we will consider the following two types of colouring algorithms for hypergraphs, which receive the vertices in some order.
Both types must colour the vertices \textit{irrevocably} -- they are not allowed to recolour vertices.
\begin{enumerate} [(1)]
    \item An \emph{online algorithm} must colour each vertex immediately so that at each step, there is no conflict in the partial colouring. \footnote{There is a large literature on online algorithms \cite{GKMZ,HSz,LST}.}
    \item A \textit{semi-online algorithm} need not colour each vertex immediately, but must ensure that at each step there is no conflict in the partial colouring.
\end{enumerate}
\end{definition}

This condition on the partial colouring means that, for example, at every step the algorithm must maintain that a point covered by $m$ rectangles is covered by all $k$ colours, but not all points have to be colored.

\begin{proof}[Proof of \autoref{thm:configs}]
We first present a colouring algorithm for towers. We colour the rectangles in increasing order of height, i.e.\ from below, so that at every step the following property holds.

\begin{enumerate}
\item[(*)]	If a point $p$ is covered by a $j$-tower for $j \leq k$, then $p$ is covered by at least $j$ different colours.
\end{enumerate}

At step $1$, colour the rectangle of least height arbitrarily.
Suppose the first $t-1$ rectangles have been coloured so that (*) holds. We colour the rectangle $R_t$ as follows.
For each $1 \leq i \leq k$, let $y_i$ be the largest number so that if $p \in R_t$ has $y$-coordinate less than $y_i$, then $p$ is covered by colour $i$. (This corresponds to a tallest rectangle $S$ of colour $i$ such that ($S,R_t$) is a tower.) If $y_i$ does not exist for some colour $i$, colour $R_t$ with colour $i$. Otherwise, suppose $y_1 > \dots > y_k$, and colour $R_t$ with colour $k$. 

Property (*) holds: if $p$ is covered by a $j$-tower, then either $p$ was already covered by $j$ colours, or we added a new colour to the set of colours covering $p$.
We use the same algorithm to colour $k$-nested sets, only we colour the rectangles from above. 
It is easy to check that with this ordering, the same property holds.

\vspace{0.3cm}
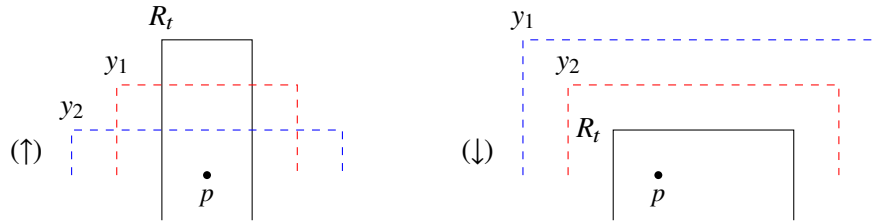
\begin{figure}[H]
	\centering {
		\begin{tikzpicture}[x=0.6cm,y=0.6cm]
		\draw (0,0)--(0,4)--(2,4)--(2,0);
		\draw[dashed, color=red] (-1,1)--(-1,3)--(3,3)--(3,1);
		\draw[dashed, color=blue] (-2,1)--(-2,2)--(4,2)--(4,1);
		
		\node[above] at (-3,1) {($\uparrow$)};
		\node[above] at (0,4) {$R_t$};
		\node[above] at (-1,3) {$y_1$};
		\node[above] at (-2,2) {$y_2$};
		
		\node[circle, fill=black,inner sep=0pt,minimum size=3pt,label=below:{\small$p$}] at (1,1) {};

		\draw (10,0)--(10,2)--(14,2)--(14,0);
		\draw[dashed, color=red] (9,1)--(9,3)--(15,3)--(15,1);
		\draw[dashed, color=blue] (8,1)--(8,4)--(16,4)--(16,1);
		
		\node[above] at (7,1) {($\downarrow$)};
		\node[left] at (10,2) {$R_t$};
		\node[above] at (9,3) {$y_2$};
		\node[above] at (8,4) {$y_1$};
		
		\node[circle, fill=black,inner sep=0pt,minimum size=3pt,label=below:{\small$p$}] at (11,1) {};

		\end{tikzpicture}
		\caption{\small{Colouring algorithms for $k$-towers and $k$-nested sets respectively}}	
	}
\end{figure}
\vspace{0.3cm}
The algorithm for increasing $k$-steps is only slightly different. We colour the rectangles in decreasing order of right endpoint (from the right). At step $t$, for $1 \leq i \leq k$, let $x_i$ be the least number so that if $p \in R_t$ has $x$-coordinate greater than $x_i$, then $p$ is covered by a rectangle of colour $i$ (corresponding to the leftmost rectangle $S$ of colour $i$ such that ($R_t,S$) form increasing steps). As earlier, if some $x_i$ does not exist, give $R_t$ colour $i$. Otherwise, if $x_1 < \dots < x_k$, give $R_t$ colour $k$.
\vspace{0.3cm}
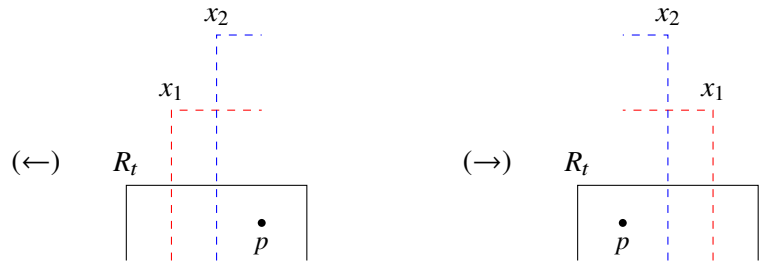
\begin{figure}[H]
	\centering{
		\begin{tikzpicture}[x=0.6cm]
		\draw (0,0)--(0,1)--(4,1)--(4,0);
		\draw[dashed, color=red] (1,0)--(1,2)--(3,2);
		\draw[dashed, color=blue](2,0)--(2,3)--(3,3);
		
		\node[above] at (0,1) {$R_t$};
		\node[above] at (1,2) {$x_1$};
		\node[above] at (2,3) {$x_2$};
		\node[above] at (-2,1) {($\leftarrow$)};
		
		\node[circle, fill=black,inner sep=0pt,minimum size=3pt,label=below:{\small$p$}] at (3,0.5) {};

		\draw (10,0)--(10,1)--(14,1)--(14,0);
		\draw[dashed, color=red](13,0)--(13,2)--(11,2);
		\draw[dashed, color=blue](12,0)--(12,3)--(11,3);
		\node[above] at (10,1) {$R_t$};
		\node[above] at (13,2) {$x_1$};
		\node[above] at (12,3) {$x_2$};
		\node[above] at (8,1) {($\rightarrow$)};
		
		\node[circle, fill=black,inner sep=0pt,minimum size=3pt,label=below:{\small$p$}] at (11,0.5) {};

		\end{tikzpicture}
		\caption{\small{Colouring algorithms for increasing and decreasing $k$-steps respectively}}
	}	
\end{figure}
\end{proof}
Note that ordering a tower in increasing order of height (from below) is the same as ordering it in decreasing order of right endpoint (from the right), or increasing order of left endpoint (from the left).
We may repeat the same algorithm for towers, colouring the rectangles from the left (or right) and we will still obtain a good colouring.
Similarly, ordering a nested set from above is the same as ordering it from the left or right.
Ordering increasing steps from the right (resp.\ decreasing steps from the left) is the same as ordering them from above.
This is all to say that the same algorithm can be used from these ``good'' directions for each fixed configuration.

\begin{center}
	\bgroup
	\def\arraystretch{1.5}
	\begin{tabular}[t]{|c|c|c|c|c|}
		\hline 
		
		& \small{left ($\rightarrow$)} & \small{right ($\leftarrow$)} & \small{below ($\uparrow$)} & \small{above ($\downarrow$)}\\
		\hline
		\small{inc.\ steps} & \small{$\infty$} & \small{$=k$} & \small{$\infty$} & \small{$=k$} \\
		\hline
		
		\small{dec.\ steps} & \small{$=k$} &\small{$\infty$} &\small{$\infty$}&\small{$=k$}\\
		\hline
		
		\small{towers} & \small{$=k$} & \small{$=k$} & \small{$=k$} &  \small{$\infty$}  \\
		\hline
		
		\small{nested} & \small{$=k$} & \small{$=k$} & \small{$\infty$} & \small{$=k$} \\
		\hline
\end{tabular}\label{tab:configs}
\captionof{table}{\small{$m_k^*$ values for each configuration given by semi-online algorithms from different directions.}}
\egroup
\end{center}

The value $\infty$ indicates the non-existence of semi-online colouring algorithms, which we prove in the next section.

The next natural question to ask is: how can we combine these colourings?
Nested rectangles seem to have the ``simplest'' structure of the four configurations.
Indeed, ordering nested rectangles from above is the same as ordering them from the left or right. 
Further, if ($R_1, R_2$) are nested ($R_1$ contains $R_2$), then any point in $R_2$ is neccessarily contained in $R_1$. 
This shows that we can modify the algorithms for the other configurations to also colour nested rectangles.

\begin{proposition}
	$m_k^*=k$ if we $k$-colour any finite family $\mathcal{F}$ with respect to 
	\begin{enumerate}[(a)]
		\item towers and nested sets
		\item increasing steps and nested sets
		\item decreasing steps and nested sets
	\end{enumerate}
\end{proposition}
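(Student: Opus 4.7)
For each of (a), (b), and (c), the strategy exploits the flexibility of nested sets noted just before the proposition: nested families admit three good directions for semi-online colouring (from above, from the left, and from the right), and for each of the three pairs one can find a common good direction. I would process from the left in (a) (this is from-below for towers and from-above for nested), from the right in (b) (from-the-right for increasing steps and from-above for nested), and from the left in (c) (from-above for decreasing steps and from-above for nested). In each case I apply the semi-online algorithm from the proof of \autoref{thm:configs} for the non-nested configuration \emph{unchanged}, maintaining the combined invariant that every point $q$ covered by a $j$-tower (or $j$-step) or a $j$-nested family of already-processed rectangles, with $j \le k$, is covered by at least $j$ distinct colours.

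The key structural observation is that in either type of chain through a point $q \in R_t$ that $R_t$ extends, each chain colour $c$ satisfies a favourable coverage inequality: in case (a) for instance, $y_c \ge y_q$. This holds in the tower case because each predecessor horizontally contains $R_t$ and has top at least $y_q$ (being part of a chain through $q$), and in the nested case because each predecessor properly contains $R_t$, so its contribution to $y_c$ is the full height of $R_t$, which is again at least $y_q$. Conversely, any colour $c' \notin C(q)$ must satisfy $y_{c'} < y_q$ or be undefined: if some rectangle realized $y_{c'} \ge y_q$, it would horizontally contain $R_t$ and have top at least $y_q$, hence it would cover $q$ and place $c'$ in $C(q)$, a contradiction. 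These two facts force the algorithm's choice (an undefined $y_i$ if one exists, otherwise the colour with the smallest $y_i$) to fall outside $C(q)$, giving the required chromatic increment in the only non-trivial subcase $|C(q)|=j(q)$.

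A short induction confirms that in this subcase, the $j(q)$ chain elements have already received $j(q)$ distinct colours from the same algorithm (because each was processed when the corresponding $y_i$-values of the then-shorter chain were defined, forcing the use of a previously undefined colour), so $C(q)$ coincides with the chain colours and contains no ``side'' contributions that the algorithm could inadvertently reuse. The arguments for (b) and (c) are formally identical with $x_i$ replacing $y_i$ (and inequalities reversed, since in those cases one picks the \emph{largest} $x_i$). The main obstacle I expect is to phrase the inductive hypothesis cleanly enough to cover towers, steps, and nested chains simultaneously, and to verify rigorously the dichotomy that colours outside $C(q)$ have coverage values strictly on the wrong side of $q$'s position --- this step is short but must be spelled out to rule out pathological configurations where a rectangle contributes to $y_i$ without covering $q$.
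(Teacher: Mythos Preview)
Your approach is the same as the paper's: run the \autoref{thm:configs} algorithm from a direction that is simultaneously good for both configurations (the paper also processes from the right or from the left for (a), and analogously for (b) and (c)), and maintain the combined invariant~(*). The dichotomy you isolate --- chain colours have $y_c\ge y_q$, while colours not in $C(q)$ have $y_{c'}<y_q$ --- is exactly the mechanism the paper uses.

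However, your ``short induction'' paragraph contains a false step. You assert that each chain element, when processed, was ``forced to use a previously undefined colour'' because the $y_i$-values of the shorter chain were already defined. That is not how the algorithm behaves: if \emph{all} $y_i$ happen to be defined (which side rectangles unrelated to the chain can easily arrange), the algorithm picks the colour with the smallest $y_i$, and nothing prevents this from coinciding with a colour already used on the chain. Concretely, with $k=3$: process $R_1=(-1,100,1)$ first (colour~1), then two tall rectangles $A,B$ of colours $2,3$ that horizontally contain everything to come, then $R_2=(1,70,2)$. At that moment all three $y_i$ are defined, $y_1$ is smallest, and $R_2$ receives colour~$1$ --- the same as $R_1$. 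So chain elements need \emph{not} get distinct colours, and your induction as stated fails.

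Your conclusion that in the subcase $|C(q)|=j-1$ the chain colours are distinct and coincide with $C(q)$ is in fact true, but for a different reason (which the paper also glosses over). The clean fix is to strengthen the invariant: track not $C(q)$ but the set $Z(q)=\{i:y_i\ge y_q\}$ of colours covering $q$ via a rectangle that \emph{horizontally contains} $R_t$. The crucial observation is that any rectangle $S$ horizontally containing $R_{j-1}$ and processed before it also horizontally contains $R_t$, since $l(S)<l(R_{j-1})<l(R_t)$ and $r(S)>r(R_{j-1})>r(R_t)$ in a tower or nested chain processed from one side. Hence $Z(q)$, together with the colour of the current chain end, can only grow along the chain; inductively $|Z(q)|\ge j-1$, and since $c(R_t)$ has the smallest $y$-value one gets $|Z(q)\cup\{c(R_t)\}|\ge j$. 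That is the missing ingredient you would need to supply.
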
 
\begin{proof} We first present the algorithm for towers and nested sets. The precise statement is that any finite family $\mathcal{F}$ can be $k$-coloured so that any point contained in a $k$-tower or a $k$-nested set is covered by all $k$ colours.
We colour the rectangles from the right (this can also be done from the left). We maintain the same property as earlier.
	
	\begin{enumerate}
	\item[(*)]	If a point $p$ is covered by a $j$-tower or a $j$-nested set for $j \leq k$, then $p$ is covered by at least $j$ different colours.
	\end{enumerate}
	
	At step $t$, for $1 \leq i \leq k$, let $y_i$ be the greatest number so that if $p \in R_t$ has $y$-coordinate less than $y_i$, then $p$ is covered by a rectangle of colour $i$. 
	As earlier, if some $y_i$ does not exist, give $R_t$ colour $i$. Otherwise, suppose $y_1 > \ldots > y_k$, and give $R_t$ colour $k$.
	To prove that (*) holds is not as straightforward as in \autoref{thm:configs}. 
	Let $y$ denote the height of $R_t$. 
	If $y_k > y$, or $y > y_1$, (*) holds by the same argument as in \autoref{thm:configs}. 
	If not, we have $y_1 > \dots > y_{l-1} > y > y_l > \dots > y_k$. 
	Suppose $p \in R_t$ is covered by a $j$-nested set $R_1, \dots, R_{j-1}, R_t$. 
	Since each $y_i$ is maximal, (*) holds by the same argument as earlier. 
	The only essentially different case is when $p \in R_t$ is covered by a $j$-tower. 
	If we did not add a new colour to the set of rectangles containing $p$, this means that $p$ was already covered by a rectangle of colour $k$. However, as $y_k$ was chosen to be maximal, the $y$-coordinate of $p$ must be less than $y_k$, so $p$ is already covered by all $k$ colours. 
	
	The algorithms for increasing and decreasing steps are modified in the exact same way. \end{proof}

\subsection{Unit-width rectangles}\label{sec:unitwidth}
The next natural pair of Erd{\H o}s-Szekeres configurations to attempt to combine is the increasing and decreasing steps.
For this, we consider a different setup; refer to \textit{steps} as the case when we assume that $\F$ does not contain any towers or nested sets.
What is $m_{k, \text{steps}}^*$?

Let \textit{unit bottomless} be the case when all the rectangles in $\F$ have the same width, or unit width.  
It is clear that ``\textit{unit bottomless} $\subset$ \textit{steps}'', as any family of unit bottomless rectangles cannot contain towers or nested sets.

\begin{proposition}
``\textit{steps} $=$ \textit{unit bottomless}'', i.e.\ any family of tower- and nested set-free rectangles can be realised as a family of unit width bottomless rectangles so that the corresponding dual hypergraphs are isomorphic.
\end{proposition}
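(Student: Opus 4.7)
The plan is to show that for a tower-free and nested-set-free family $\F$, the combinatorial structure of the arrangement (and hence its dual hypergraph) is determined by two pieces of data: the linear order on the multiset of left- and right-endpoints of the horizontal intervals, and the heights $t_i$. Indeed, each non-empty cell of the arrangement lies inside a vertical strip delimited by two consecutive values in the merged endpoint order; within such a strip the set of ``active'' rectangles is fixed, and the cells are stacked horizontal slabs whose cover sets form the chain $\{i : t_i \geq y\}$, determined purely by the heights of the active rectangles. Therefore, if I keep the heights fixed and replace each horizontal interval $[l_i, r_i]$ by a unit-width interval $[l_i', l_i'+1]$ that preserves the merged endpoint order, the resulting family has an isomorphic dual hypergraph.

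To do the replacement, observe that tower-freeness and nested-set-freeness together forbid any strict containment between horizontal intervals, so after a small perturbation and sorting $l_1 < l_2 < \cdots < l_n$ we automatically have $r_1 < r_2 < \cdots < r_n$. Let $k_i := \max\{j : l_j \leq r_i\}$; then $R_i$ and $R_j$ intersect (for $i < j$) iff $j \leq k_i$, and the sequence $(k_i)$ is non-decreasing. I will construct the $l_i'$ by induction on $i$, maintaining the invariant $(\ast)$: for all already-processed $a < b$, $l_b' - l_a' < 1$ iff $R_a \cap R_b \neq \emptyset$. At step $i$, the constraints on $l_i'$ coming from $(\ast)$ for each $j < i$ collapse, using the monotonicity of $(k_i)$, to the single open interval
\[
\bigl(\max(l_{i-1}',\ l_{j^*}' + 1),\ l_{j^*+1}' + 1\bigr),
\]
where $j^* := \max\{j < i : k_j < i\}$, under the conventions $l_0' = -\infty$ and $l_{j^*+1}' + 1 = +\infty$ when $j^* = i-1$.

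The main technical point is checking that this interval is always non-empty; this is exactly where $(\ast)$ is invoked. The left endpoint lies strictly below the right because $l_{j^*}' < l_{j^*+1}'$ by the previous step, and because whenever $j^* + 1 \leq i-1$ the rectangle $R_{j^*+1}$ intersects both $R_{i-1}$ and $R_i$ (both statements follow from $k_{j^*+1} \geq i$), so $(\ast)$ at step $i-1$ gives $l_{i-1}' - l_{j^*+1}' < 1$. Picking any $l_i'$ in this interval extends $(\ast)$ to index $i$ and closes the induction. Since the resulting merged endpoint order matches the original one and the heights are unchanged, the opening observation delivers the claimed isomorphism of dual hypergraphs.
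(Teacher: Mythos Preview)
Your argument is correct and follows essentially the same inductive construction as the paper: build the unit-width realisation one rectangle at a time along the left-endpoint order, using the fact that tower- and nested-freeness forbid horizontal-interval containment to guarantee each new placement is feasible. The paper peels off the \emph{leftmost} rectangle and re-inserts it (its key step is that the rectangles meeting the leftmost one pairwise intersect, hence their realised left endpoints span length $<1$), whereas you append rectangles left to right and phrase feasibility via the interval $(\max(l_{i-1}',\,l_{j^*}'+1),\,l_{j^*+1}'+1)$; your check that $R_{j^*+1}$ meets $R_{i-1}$ is the same clique observation, and your preliminary reduction (merged endpoint order plus heights determines the dual hypergraph) makes explicit what the paper leaves implicit.
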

\begin{proof}
We prove the inclusion steps $\subseteq$ unit bottomless by our favourite method, induction on $|\mathcal{F}|$. Suppose any family $\mathcal{F}$ of $n-1$ rectangles that do not contain towers or nested sets can be realised as a family $\mathcal{F}_{unit}$ of unit bottomless rectangles (with an isomorphic hypergraph), and that this realisation preserves heights and the ordering of left endpoints. That is, the height of a rectangle $R$ in $\mathcal{F}$ is the same as its realisation in $\mathcal{F}_{unit}$.

 Let $|\mathcal{F}| = n$, and let $R$ be the leftmost rectangle in $\mathcal{F}$. Take any realisation of $\mathcal{F}\setminus R$ as a family $\mathcal{G}$, and let $R_1, \dots, R_m$ be the rectangles that intersect $R$, and $R_1', \dots, R_m'$ their realisations. Assume without loss of generality that $l(R_1) < \dots < l(R_m)$.
 
In particular, $l(R_m) < r(R) < r(R_1)$ (as they intersect), so $R_1, \dots, R_m$ also intersect each other. This implies that the interval $[l(R_1'), \dots, l(R_m')]$ has length strictly less than $1$. Thus for $\epsilon$ small enough, if we realise $R$ as a unit width rectangle $R'$ with $r(R') = l(R_m') + \epsilon$ with the same height, then $R'$ will intersect exactly the rectangles $R_1', \dots, R_m'$ (with the same hypergraph structure).
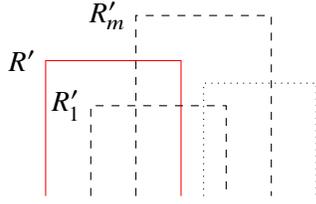
\begin{figure}[H]
\begin{center}
\begin{tikzpicture}[x=0.6 cm, y = 0.6cm]
\draw[color=red] (0,0)--(0,3)--(3,3)--(3,0);
\draw[dashed] (1,0)--(1,2)--(4,2)--(4,0);
\draw[dashed](2,0)--(2,4)--(5,4)--(5,0);
\draw[dotted](3.5,0)--(3.5,2.5)--(6,2.5)--(6,0);

\node[left] at (0,3) {$R'$};
\node[left] at (1,2) {$R_1'$};
\node[left] at (2,4) {$R_m'$};

\end{tikzpicture}
\end{center}
\caption{\small{We ensure that the realisation of $R$ preserves the hypergraph structure.}}
\end{figure}
\end{proof}

So instead of considering colouring points with respect to bottomless rectangles, we may consider colouring steps with respect to points.
\begin{proposition} \label{prop:stepspt}
	For steps, $m_k^* \leq 2k-1$. 
\end{proposition}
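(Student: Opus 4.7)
By the realization proved in the previous proposition, a family of steps is hypergraph-isomorphic to a family of unit-width bottomless rectangles, so it suffices to show $m_k^*(\Funit)\le 2k-1$. Moreover, $\Funit$ enjoys a natural self-duality: the involution sending $R=[l,l+1]\times(-\infty,t]$ to the point $(l,-t)$ and the point $(x,y)$ to the rectangle $[x-1,x]\times(-\infty,-y]$ preserves point-rectangle incidence, so $m_k^*(\Funit)=m_k(\Funit)$. It is therefore enough to show that every finite point set $P\subset\R^2$ admits a $k$-colouring in which every unit-width bottomless rectangle containing at least $2k-1$ points realises all $k$ colours.

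After sorting $P$ by $x$-coordinate as $p_1,\ldots,p_n$ with heights $y_1,\ldots,y_n$, the points inside any unit strip $[l,l+1]$ form a contiguous range of the sorted list, and intersecting with $\{y\le t\}$ leaves the bottom-$j$ subset of that range by $y$-value. The problem becomes combinatorial: given real heights $y_1,\ldots,y_n$, $k$-colour $[n]$ so that for every contiguous range $[\alpha,\beta]$ the $2k-1$ indices in $[\alpha,\beta]$ of smallest $y$-value realise all $k$ colours. My plan is to argue this by induction on $n$, splitting at the global minimum $i^\ast=\arg\min_i y_i$: every contiguous range either lies in $L=[1,i^\ast-1]$, lies in $R=[i^\ast+1,n]$, or contains $i^\ast$. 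In the first two cases the inductive hypothesis on the restricted subsequence applies; in the third, $i^\ast$ itself is automatically one of the bottom-$(2k-1)$ indices of the range, and the other $2k-2$ come from $[\alpha,i^\ast-1]$ and $[i^\ast+1,\beta]$.

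The main obstacle is handling type-(C) ranges when the remaining $2k-2$ indices split very unevenly between $L$ and $R$ (say $a+b=2k-2$ with $a$ small), so that the minority side contributes too few indices to give many colours by the naive inductive hypothesis. To circumvent this I would strengthen the inductive hypothesis to a \emph{graded} invariant: for each $0\le j\le k-1$, the bottom-$(2k-1-2j)$ set of every contiguous subrange realises at least $k-j$ colours. The ``budget'' $2k-1$ is precisely what allows this graded statement to be preserved across the recursion at $i^\ast$: regardless of the $(a,b)$ split, the inductive hypothesis at parameters tuned to $a$ and $b$ respectively gives enough colours on each side to combine with a carefully chosen colour for $i^\ast$ and cover all of $[k]$. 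Verifying that the graded invariant is maintained by selecting $c(i^\ast)$ to patch up whichever colours are missing on both sides simultaneously is the technical heart of the proof, and is where the tightness between $k$ and $2k-1$ appears.
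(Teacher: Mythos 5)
Your opening observations are correct and genuinely useful: the reduction of ``steps'' to unit-width families via the preceding realisation proposition, and the self-duality of $\Funit$ under $[l,l+1]\times(-\infty,t]\leftrightarrow(l,-t)$, both check out, and they correctly reduce the claim to a primal statement: $k$-colour any finite point set so that every unit-width bottomless rectangle containing at least $2k-1$ points sees all $k$ colours. But what follows is a plan rather than a proof, and the plan has a gap precisely at the step you flag as the ``technical heart.''

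The difficulty is in your type-(C) case. For a range $[\alpha,\beta]$ containing the global minimum $i^*$, the bottom-$(2k-1-2j)$ set decomposes as $S_L\cup\{i^*\}\cup S_R$, and you must choose $c(i^*)$ \emph{once} so that this union realises $\ge k-j$ colours for \emph{every} such $[\alpha,\beta]$ and every $j$. Different ranges impose incompatible demands. Already for $k=2$: take the range $[i^*,\beta]$ with $(a,b)=(0,2)$, whose $S_R$ is the bottom-$2$ of $[i^*+1,\beta]$, and the range $[\alpha,i^*]$ with $(a,b)=(2,0)$, whose $S_L$ is the bottom-$2$ of $[\alpha,i^*-1]$. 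Your graded invariant only constrains bottom sets of odd sizes $2k-1-2j$, so nothing forbids the recursive colourings of the two sides from leaving the first bottom-$2$ monochromatic red and the second monochromatic blue; then no choice of $c(i^*)$ among two colours makes both bottom-$3$ sets polychromatic. Since the two sides are coloured by independent recursive calls, you have no mechanism to coordinate them, and I do not see how to strengthen the invariant to even sizes without essentially redoing the argument from scratch.

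The paper's proof is a different and much shorter route. It orders the rectangles of a tower- and nested-free family by left endpoint and shows directly that the resulting dual hypergraph is ABA-free: if $l(R_1)<l(R_2)<l(R_3)$ and $p\in(R_1\cap R_3)\setminus R_2$, then $R_2$'s top side is covered by those of $R_1$ and $R_3$, so no point can lie in $R_2$ and avoid both $R_1$ and $R_3$. It then invokes the theorem of Keszegh and P\'alv\"olgyi that ABA-free hypergraphs satisfy $m_k\le 2k-1$. Incidentally, the primal hypergraph you constructed (points ordered by $x$-coordinate, edges cut out by unit-width bottomless rectangles) is also ABA-free by a symmetric argument, so your self-duality observation could be combined with the ABA-free machinery to give a valid alternative; but the elementary recursion on the global minimum, as sketched, does not close.
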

The proof of the proposition will use ABA-free hypergraphs \cite{ABA}. We say a hypergraph $\mathcal{H}$ with an ordering $<$ of its vertex set is ABA-free if there are no hyperedges $A$ and $B$ and vertices $x < y < z$ with $x,z \in A \setminus B$ and $y \in B \setminus A$. For example, interval hypergraphs - where the vertices are points in $\mathbb{R}$ and the hyperedges are the subsets induced by some intervals - are ABA-free.
A result of \cite{ABA} tells us that for ABA-free hypergraphs, $m_k \leq 2k-1$.

\begin{proof}[Proof of \autoref{prop:stepspt}]
Let $\mathcal{F}$ be a family containing no nested sets or towers and $P$ a finite point set. We claim that by ordering the rectangles by left endpoint, the resulting hypergraph on the vertex set $\mathcal{F}$ with edges induced by $P$ is ABA-free. Suppose for contradiction we have three rectangles with $l(R_1) < l(R_2) < l(R_3)$, and points $p$ and $q$ so that $p \in (R_1 \cap R_3) \setminus R_2$, and $q \in R_2 \setminus (R_1 \cap R_2)$.

Recall that a point $(x,y)$ is in a rectangle $R$ if and only if $x \in [l(R), r(R)]$ and $y < y(R)$. Let $p = (x_p, y_p)$ and $q = (x_q,y_q)$.
Then, $p \in R_1, R_3$ but $p \notin R_2$ implies,
\[
l(R_1) < l(R_2) < l(R_3) < x_p < r(R_1) < r(R_2) < r(R_3)\text{, and}
\]
\[
y(R_1), y(R_3) > y_p > y(R_2)
\]
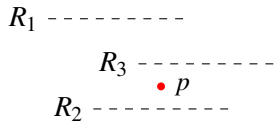
\begin{figure}[H]
\begin{center}
    \begin{tikzpicture}[x = 0.6 cm, y = 0.6 cm]
\draw[dashed] (0,3)--(3,3); 
\node[left] at (0,3) {$R_1$};
\draw[dashed] (1,1)--(4,1); 
\node[left] at (1,1) {$R_2$};
\draw[dashed] (2,2)--(5,2); 
\node[left] at (2,2) {$R_3$};
    \node[circle, fill=red,inner sep=0pt,minimum size=3pt,label=right:{\small$p$}] at (2.5,1.5) {};
    \end{tikzpicture}
\end{center}
\caption{\small{An arrangement of $p$ and the three rectangles implied by the above conditions.}}\label{fig:stepsaba}
\end{figure}

As a result, $[l(R_2), r(R_2)]$ is covered by the intervals $[l(R_1), r(R_1)]$ and $[l(R_3), r(R_3)]$, and $R_2$ is below $R_1$ and $R_3$; the ``top side'' of $R_2$ is covered by the top sides of $R_1$ and $R_3$ as in \autoref{fig:stepsaba}.
Thus any point in $R_2$ is contained in at least one of $R_1$ and $R_3$, contradicting that $q \in R_2$ but $q \notin R_1$ or $R_3$.

\end{proof}

We end this subsection by extending this to families that do not contain towers.

\begin{theorem}\label{thm:towerfree}
For families $\mathcal{F}$ that do not contain towers, $m_k^* \leq 2k-1$.
\end{theorem}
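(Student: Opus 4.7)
The plan is to mimic the proof of \autoref{prop:stepspt} by ordering the rectangles by left endpoint and showing that the resulting dual hypergraph on vertex set $\mathcal{F}$, with hyperedges induced by points of $P$, is ABA-free; the bound $m_k^* \leq 2k-1$ will then follow from the result of \cite{ABA}.

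The key structural observation I would establish first is the following: in any tower-free family, if $R, R' \in \mathcal{F}$ satisfy $l(R) < l(R')$ and $r(R) \geq r(R')$, then $R' \subseteq R$. Indeed, the $x$-projection of $R'$ is nested inside that of $R$, so to avoid a $2$-tower the heights must satisfy $y(R') \leq y(R)$, which together with $R$ and $R'$ being bottomless forces $R' \subseteq R$.

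Next, suppose for contradiction that the hypergraph is not ABA-free: there exist $R_1, R_2, R_3 \in \mathcal{F}$ with $l(R_1) < l(R_2) < l(R_3)$ together with points $p, q$ satisfying $p \in (R_1 \cap R_3) \setminus R_2$ and $q \in R_2 \setminus (R_1 \cup R_3)$. I would apply the structural observation twice: if $r(R_1) \geq r(R_2)$, then $R_2 \subseteq R_1$, and $q \in R_2 \setminus R_1$ is impossible; symmetrically, if $r(R_2) \geq r(R_3)$, then $R_3 \subseteq R_2$, and $p \in R_3 \setminus R_2$ is impossible. Hence necessarily $r(R_1) < r(R_2) < r(R_3)$, placing the three rectangles in increasing-step position, and the argument of \autoref{prop:stepspt} then applies verbatim: $p \in R_1 \cap R_3$ forces $r(R_1) \geq l(R_3)$, so $[l(R_2), r(R_2)] \subseteq [l(R_1), r(R_1)] \cup [l(R_3), r(R_3)]$; and $p \notin R_2$ together with $x_p \in [l(R_2), r(R_2)]$ forces $y(R_2) < y_p < \min\{y(R_1), y(R_3)\}$. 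Any $q \in R_2$ therefore has $x_q \in [l(R_1), r(R_1)] \cup [l(R_3), r(R_3)]$ and $y_q < y(R_2)$, placing $q$ in $R_1 \cup R_3$ and contradicting the ABA condition.

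The only potential obstacle is correctly book-keeping the tower-free case analysis, but this is straightforward once one notes that a $2$-tower is precisely the configuration $l(R) < l(R'),\ r(R) > r(R'),\ y(R) < y(R')$, so the only alternative under the nested $x$-interval constraint is $R' \subseteq R$. In essence, tower-freeness lets us ``ignore'' the nested pairs when checking the ABA property, because any nested pair already violates one of the set-difference conditions required of the forbidden configuration.
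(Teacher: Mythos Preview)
Your proof is correct and follows the paper's approach: order the rectangles by left endpoint, show the dual hypergraph is ABA-free, and apply the $2k-1$ bound of \cite{ABA}. Your case analysis is in fact a bit more direct than the paper's---you use the structural observation to rule out nesting of $(R_1,R_2)$ and $(R_2,R_3)$, forcing $r(R_1)<r(R_2)<r(R_3)$ and reducing verbatim to the pure-steps situation of \autoref{prop:stepspt}, whereas the paper argues contrapositively (via \autoref{prop:stepspt}) that some pair must be nested, identifies it as $(R_1,R_3)$, and leaves the ensuing contradiction as ``easy to check''.
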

\begin{proof}
As earlier, we want to show that the corresponding hypergraph is ABA-free. Suppose again that we have three rectangles with $l(R_1) < l(R_2) < l(R_3)$, and points $p$ and $q$ so that $p \in R_1, R_3$, $q \notin R_1, R_3$, and $q \in R_2$, $p \notin R_2$.

The previous proposition shows $R_1, R_2, R_3$ must contain at least one nested set. It is also easy to see that not all three of them can form a nested set, so exactly two of them do. Further, the condition $p \in R_1, R_3$ but $q \notin R_1, R_3$ implies that $(R_1,R_3)$ must form a nested set (where $R_1$ contains $R_3$). In this case, it is easy to check that it is not possible to have a rectangle $R_2$ that forms steps with both $R_1$ and $R_3$, and contains $q$ but not $p$.
\end{proof}

\subsection{Other families}\label{sec:other}
For families that are tower-free, $m_k^*$ is linear.
What of families that do contain towers?

We say a family of bottomless rectangles is \textit{hanging} if their left endpoints lie on the line $y=x$.
It is clear that we can choose any line with positive slope, as rotating the line $y=x$ and moving the left endpoints along with it preserves the hypergraph structure.
So more generally, a \textit{hanging family} is one whose left endpoints all lie on a fixed line with positive slope, which will be $y=x$ for convenience.
A tower, for example, can be realised as a hanging arrangement by ``stretching'' the left endpoints.

\begin{proposition}\label{prop:hanging}
For \textit{hanging families}, $m_k^* \leq 3k-2$.
\end{proposition}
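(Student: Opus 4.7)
The plan is to reduce the dual polychromatic problem for hanging families to the \emph{primal} polychromatic problem for bottomless rectangles, and then invoke the bound $m_k(\Frect) \leq 3k-2$ of Asinowski et al.\ \cite{A+13}.

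Assuming the left endpoints lie on the line $y=x$, each hanging rectangle $R$ has the form $\{(x,y): l(R) \leq x \leq r(R),\ y \leq l(R)\}$ and is thus determined by the pair $(l(R), r(R))$ with $l(R) \leq r(R)$. First I would send each $R \in \F$ to the point $\phi(R) := (l(R), -r(R)) \in \R^2$. Any point $p = (x_p, y_p) \in P$ with $y_p > x_p$ is covered by no hanging rectangle and may be discarded; each remaining point I would send to the bottomless rectangle $\psi(p) := (y_p,\, x_p,\, -x_p)$, i.e.\ the one with left-coordinate $y_p$, right-coordinate $x_p$, and top $-x_p$.

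The key verification is that incidences are preserved: $\phi(R) \in \psi(p)$ iff $y_p \leq l(R) \leq x_p$ and $-r(R) \leq -x_p$, which are exactly the three inequalities $y_p \leq l(R)$, $l(R) \leq x_p$, and $x_p \leq r(R)$ encoding that $R$ covers $p$. Thus the dual hypergraph of $(\F, P)$ coincides with the primal hypergraph of $(\psi(P), \phi(\F))$. Applying the Asinowski et al.\ theorem to the finite point set $\phi(\F)$ produces a $k$-colouring under which every bottomless rectangle containing at least $3k-2$ of these points sees all $k$ colours; transferring this colouring back through $\phi^{-1}$ gives the required $k$-colouring of $\F$.

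The only nontrivial step is spotting the coordinate change; after that the incidence equivalence is a one-line verification and there is no real obstacle. A pleasant side effect is that the bound $3k-2$ for hanging families is inherited precisely from the primal bottomless bound, and would improve automatically with any future improvement of $m_k(\Frect)$.
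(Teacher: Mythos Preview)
Your proof is correct and follows essentially the same approach as the paper: both reduce the dual problem for hanging families to the primal problem for bottomless rectangles via an incidence-preserving point/rectangle swap, then invoke the $3k-2$ bound of Asinowski et al. The specific coordinate change differs (the paper sends $R$ to its top-right corner and each point $p$ to an ``infinite hanging rectangle'' from $(x_p,x_p)$, whereas you use $\phi(R)=(l(R),-r(R))$ and $\psi(p)=(y_p,x_p,-x_p)$), but this is a cosmetic difference, and your verification of the three incidence inequalities is in fact cleaner.
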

This proof relies on a reduction to the primal problem for general families of bottomless rectangles, when we colour points with respect to bottomless rectangles.
We want to show that any hanging family $\F$ and point set $P$ can be realised as a family $\F(P)$ and point set $P(\F)$ so that a rectangle $R \in \F$ covers a point $q \in P$ if and only if the corresponding point $r \in P(\F)$ is contained in the rectangle $Q \in \F(P)$.

To each rectangle $R \in \F$, we associate its right endpoint $r(R)$, and to each point $q=(x,y) \in P$, we associate an infinite hanging rectangle from the point $(x,x)$.
Of course, when we say infinite, we simply mean that the right endpoint of the corresponding rectangle in $\F(P)$ is sufficiently large. 

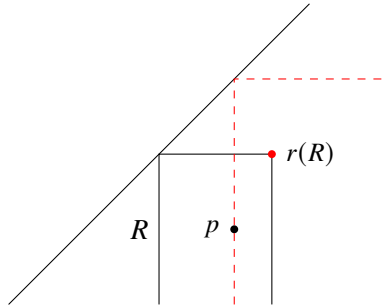
\begin{figure}[H]
    \centering{
    \begin{tikzpicture}
    \draw (-2,-2)--(2,2);
    \draw (0,-2)--(0,0)--(1.5,0)--(1.5,-2);
    \node[left] at (0,-1) {$R$};
    \node[circle, fill=black,inner sep=0pt,minimum size=3pt,label=left:{\small$p$}] at (1,-1) {};
    \draw[dashed, color=red] (1,-2)--(1,1)--(3,1);
    \node[circle, fill=red,inner sep=0pt,minimum size=3pt,label=right:{\small$r(R)$}] at (1.5,0) {};

    \end{tikzpicture}
    }
    \caption{\small{The infinite hanging rectangle from $p$ contains $r(R)$ if and only if $R$ contains $p$.}}
    \label{fig:hangingdual}
\end{figure}

Since the best upper bound for the primal problem is $3k-2$, we have the desired bound for hanging families.
However, by modifying the proof of this upper bound from \cite{A+13}, we can improve it to $2k-1$.
By the duality we observed, it suffices to consider colouring points with respect to infinite hanging rectangles.
Order the points in increasing order of $y$-coordinate (from below).
Note that if $p$ is to the left of some hanging rectangle $R$, then $R$ does not contain $p$, so once we start colouring points of $R$ we may ``disregard'' $p$.
We present the points in increasing order of $y$-coordinate as follows.
At step $t$, suppose $q$ is to be presented, and $p$ is the leftmost point at this step. 
If $q$ is covered by a rectangle to the \textit{right} of $p$, first we ``delete'' $p$, then we present $q$.
If not, then we present $q$ without deleting any points.
This enables us to only care about the leftmost $2k-1$ points, as we discard the nonessential ones.

More precisely, we frame this as a \textit{dynamic colouring problem} on the line.
We wish to colour a dynamically appearing point set $P$ where one of the following kinds of events can occur.
\begin{enumerate}[(1)]
    \item A new point appears.
    \item The leftmost point disappears.
\end{enumerate}

\begin{theorem}\label{thm:hanging}
For hanging arrangements, $m_k^* \leq 2k-1$.
\end{theorem}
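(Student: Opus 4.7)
The plan is to solve the dynamic coloring problem formulated in the preceding paragraphs: points arrive on $\R$ in a stream of insertions (each coloured online with one of $k$ colours) and leftmost-deletions, so that at every moment every right-ray $[a,\infty)$ containing at least $2k-1$ current points uses all $k$ colours. Because right-rays are nested in $a$, this is equivalent to requiring that whenever there are at least $2k-1$ current points, the rightmost $2k-1$ use all $k$ colours.

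Rather than maintain this single condition directly, I would maintain the stronger inductive invariant
\[
  (\star)\qquad \text{for every } 1\le j\le 2k-1,\ \text{the rightmost } j \text{ current points use at least } f(j):=\lceil (j+1)/2\rceil \text{ colours,}
\]
so that $(\star)$ at $j=2k-1$ is exactly the desired property, since $f(2k-1)=k$. The key structural fact is that $f(j)-f(j-1)\in\{0,1\}$, with the unit jump occurring exactly when $j$ is even. Deletion of the leftmost point leaves the rightmost $j$ unchanged for every $j$ below the current count, so $(\star)$ survives automatically.

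The heart of the argument is insertion. If the new point $p$ lands at rank $i$ from the right, then for $j<i$ the rightmost $j$ is unchanged, and for $j\ge i$ the new rightmost $j$ equals $\{p\}\cup R_{j-1}$, where $R_{j-1}$ denotes the old rightmost $j-1$ and $C_{j-1}$ its colour set. I would pick a single colour $c$ for $p$ that makes $|\{c\}\cup C_{j-1}|\ge f(j)$ for every such $j$. By the old invariant we have $|C_{j-1}|\ge f(j-1)\ge f(j)-1$, so each individual constraint is either automatic (when $|C_{j-1}|\ge f(j)$) or forces $c\notin C_{j-1}$, and the latter can occur only at even $j$. Since the $C_{j-1}$'s form a nested chain in $j$, all binding constraints collapse into the single requirement $c\notin C_{j^\star-1}$ for the largest tight even index $j^\star\le 2k-2$; because $|C_{j^\star-1}|=f(j^\star)-1\le k-1$, an admissible colour exists among the $k$ available.

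The main obstacle I anticipate is choosing $f$ correctly. A weaker invariant (say only that the rightmost $2k-1$ uses all $k$ colours) fails to propagate when insertions reshuffle shorter suffixes whose colour distribution we have not controlled, while any $f$ growing faster than $\lceil(j+1)/2\rceil$ eventually demands more than $k$ colours and breaks the colour-choice step. The function $f(j)=\lceil(j+1)/2\rceil$ is the slowest admissible staircase consistent with $f(2k-1)=k$ and the online unit-step bound $f(j)\le f(j-1)+1$. Together with the reduction above, which ensures that the leftmost-deletions do not compromise any future hanging rectangle's constraint, this algorithm yields $m_k^*(\Fhang)\le 2k-1$.
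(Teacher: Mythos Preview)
Your invariant $(\star)$ and the insertion analysis are internally sound: one can indeed colour arriving points online so that for every $j\le 2k-1$ the rightmost $j$ points carry at least $\lceil(j+1)/2\rceil$ colours, and leftmost deletions trivially preserve this. The gap is in the reduction. The dynamic problem set up in the paragraphs preceding the theorem asks for the \emph{leftmost} $2k-1$ active points to be rainbow, not the rightmost; with leftmost deletions this is the nontrivial direction, and it is exactly why the paper needs its $i$-gap/$i$-prefix invariants and a semi-online (lazy) colouring rather than a pure suffix invariant. Your own remark that ``deletion of the leftmost point leaves the rightmost $j$ unchanged'' should be a warning sign: if deletions never interact with the invariant, the part of the problem that makes it hard has been discarded.

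Concretely, your algorithm fails on an actual hanging family. Take $k=2$ and rectangles $R_1,\dots,R_4$ with $(l,r)=(0,100),(1,10),(2,11),(3,12)$ (tops on $y=x$, so $t_i=l_i$). They arrive in $l$-order, with one-dimensional positions $r_i=100,10,11,12$. Your rule colours $R_1$ red; each of $R_2,R_3,R_4$ then arrives at rank $2$ from the right (only $R_1$ lies to its right), so the binding constraint at $j=2$ forces each to avoid $\{\text{red}\}$ and all three become blue. But the point $q=(5,\tfrac12)$ is covered by exactly $R_2,R_3,R_4$ (it misses $R_1$ since $l_{R_1}=0<\tfrac12$), a monochromatic $3$-fold cover. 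The underlying reason is that for $q=(x,y)$ with $y$ finite the covering set is $\{R:\,y\le l_R\le x,\ r_R\ge x\}$; among the active rectangles at sweep time $x$ this is a suffix in \emph{arrival order} $l_R$, not a right-ray in the $r_R$-coordinate, and these two orders are unrelated. The ``infinite hanging rectangle'' picture you invoke corresponds only to the degenerate case $y=-\infty$.

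The paper's own proof is organised around the opposite side: it maintains that every $i$-prefix has length at most $2k-2$ and every $i$-gap has length at least $k-1$, and when a leftmost deletion (or an insertion) creates a bad $i$-prefix it uses the gap bound to find an uncoloured point in the first $k$ positions to colour $i$. That argument genuinely engages with the deletion step; yours does not.
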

\begin{proof}
Given a dynamically appearing point set $P$ as discussed, we want to $k$-colour it so that at every step, the leftmost $2k-1$ points contain at least one point of each colour.
For $i=1, \dots, k$, define an \textit{$i$-gap} to be an inclusion-maximal set of points between two points of colour $i$, and an \textit{$i$-prefix} to be the set of points before the first point of colour $i$.
It suffices to maintain the following invariants at each step.
\begin{enumerate}[(a)]
    \item All $i$-gaps have size at least $k-1$, and
    \item all $i$-prefixes have size at most $2k-2$.
\end{enumerate}
Suppose that these invariants are satisfied at some step, and then an event of type (1) occurs. 
This can only harm (b) by creating an $i$-prefix of size $2k-1$ for some $i$.
At most $k-1$ colours occur in the leftmost $k$ points, and by (a), no colour occurs more than once.
So there is some uncoloured point which we can colour with colour $i$; by construction, this is separated from the next point of colour $i$ by at least $k-1$ points, so (a) is preserved.

Now suppose an event of type (2) occurs: the first point disappears.
Again, this can only harm (b) by creating an $i$-prefix of size $2k-1$.
This means that the leftmost point had colour $i$, so we may once again find an uncoloured point in the leftmost $k$ points of the $i$-prefix, and colour it with colour $i$.
\end{proof}

Another type of family we consider is the \textit{intersecting family}. 
These are families where any two rectangles are pairwise intersecting.
In particular, there is a point $v$ contained in the intersection of all the rectangles.
We call the vertical line through $v$ the \textit{spine} of the family.

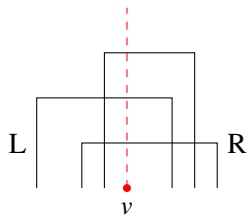
\begin{figure}[H]
    \centering{
    \begin{tikzpicture}[scale=0.6]
    \draw[dashed, color=red] (0,0)--(0,4);
    \draw (-1,0)--(-1,1)--(2,1)--(2,0);
    \draw (-2,0)--(-2,2)--(1,2)--(1,0);
    \draw (-0.5,0)--(-0.5,3)--(1.5,3)--(1.5,0);
     \node[circle, fill=red,inner sep=0pt,minimum size=3pt,label=below:{\small$v$}] at (0,0) {};
     \node[left] at (-2,1) {L};
     \node[right] at (2,1) {R};
    \end{tikzpicture}
    }
    \caption{\small{The \textit{spine} through $v$ defines the sides L and R.}}
    \label{fig:spine}
\end{figure}

\begin{theorem}\label{thm:intersecting}
For intersecting families, $m_k^* \leq 3k$.
\end{theorem}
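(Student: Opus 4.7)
The plan is to reduce the intersecting case to two one-sided dynamic subproblems---one for each side of the spine---and handle them simultaneously with an online colouring modelled on the proof of \autoref{thm:hanging}.

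\emph{Setup.} Let $\mathcal{F}$ be an intersecting family with common point $v=(x_v,y_v)$ and spine $\{x=x_v\}$; every $R=(l,r,t)\in\mathcal{F}$ satisfies $l \le x_v \le r$ and $t \ge y_v$. For a query point $p=(x_p,y_p)$, one of the two horizontal inclusion conditions is automatic: if $x_p \le x_v$ then $x_p \le r$ holds for free, and if $x_p \ge x_v$ then $l \le x_p$ holds for free. Thus whether $R \ni p$ depends only on the pair $(l,t)$ when $p$ is to the left of the spine, and only on $(r,t)$ when $p$ is to the right; a spine query ($x_p=x_v$) is a purely 1D problem in $t$ alone.

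\emph{Two-sided dynamic reduction and algorithm.} I would process rectangles in decreasing order of height $t$, viewing them as points appearing dynamically on two horizontal axes: an $l$-axis used to read off left-side queries and an $r$-axis used to read off right-side queries. A left query $p$ then corresponds to a prefix in $l$-order of the rectangles processed so far (namely those with $t \ge y_p$), and a right query corresponds to a suffix in $r$-order. This is a two-sided analogue of the dynamic problem solved in the proof of \autoref{thm:hanging}. I would maintain hanging-style invariants on both axes simultaneously: every $i$-gap on each axis has size at least $k-1$, every $i$-prefix on the $l$-axis has size at most $3k-1$, and every $i$-suffix on the $r$-axis has size at most $3k-1$. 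When a new rectangle is inserted, at most $k-1$ colours appear in the relevant leftmost window on the $l$-axis and at most $k-1$ in the rightmost window on the $r$-axis; the gap invariants guarantee that some colour lies outside both forbidden sets, and we assign that colour to the new rectangle. Preservation of invariants under insertions parallels the hanging case almost verbatim, with the slack from $2k-1$ to $3k$ precisely absorbing the cost of having to satisfy two invariants at once; spine queries are an easier 1D version and cause no additional loss.

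\emph{Main obstacle.} The crux is that the $l$-order and $r$-order on $\mathcal{F}$ need not agree, so at each insertion the forbidden-colour sets from the two ends can overlap or separate in subtle ways, and the new rectangle's position in the two orderings can be very different. The delicate part of the argument is verifying that the simultaneous gap invariants always leave at least one admissible colour at every step, and that the window size $3k-1$ is the correct threshold---any smaller and a two-sided conflict could force $k$ forbidden colours on one side and block all choices.
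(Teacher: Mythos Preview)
Your reduction is correct and genuinely different from the paper's argument: the paper never touches an online or dynamic formulation here, but instead constructs a $3$-shallow hitting set for every depth $d$ directly (by greedily choosing, for each uncovered $d$-cell on each side of the spine, the covering rectangle with shortest extension to the other side, then pruning to minimality) and invokes the standard implication ``$c$-shallow hitting sets for all $d$ $\Rightarrow m_k^*\le ck$''. Your observation that left queries are prefixes in $l$-order and right queries are suffixes in $r$-order of the rectangles processed so far (from above) is a nice alternative entry point.

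The gap is in the algorithmic step. You propose to colour \emph{the newly inserted rectangle} with a colour outside the union of two forbidden sets, one per axis, each of size at most $k-1$. But $2(k-1)\ge k$ for every $k\ge 2$, so there is no reason the union misses a colour; already for $k=2$ the two neighbours of the new point in $l$-order can carry both colours, blocking everything. More structurally, the hanging argument you are imitating is \emph{semi}-online: it leaves points uncoloured and, when an $i$-prefix grows too large, colours some uncoloured point in a short leftmost window with colour $i$. In your two-axis version that move is exactly where the interaction bites: a point chosen because it sits in the leftmost $l$-window sits at an uncontrolled position in $r$-order, so colouring it with $i$ can destroy the $i$-gap invariant on the $r$-axis, and symmetrically. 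Your ``Main obstacle'' paragraph names this difficulty but does not resolve it; nothing in the stated invariants prevents both axes from simultaneously forbidding all $k$ colours, nor guarantees an uncoloured point that is safe on both axes. Without an additional idea linking the two orderings (they can be arbitrary permutations of one another for intersecting families), the argument does not close. The paper's hitting-set construction sidesteps this entirely by working with depth-$d$ cells rather than with any online process.
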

Our proof will rely on \textit{shallow hitting sets}.
\begin{definition}
A subset $S \subset \F$ is a \textit{$c$-shallow hitting set for depth $d$} if every point that is covered by exactly $d$ rectangles is covered by at least $1$ and at most $c$ rectangles of $S$. 
\end{definition}
Suppose for fixed $c$ and a family $\F$ of bottomless rectangles, we have a $c$-shallow hitting set for any depth $d$.
First we construct a $c$-shallow hitting set $S_1$ for depth $ck$, then remove $S_1$ from $\F$ and construct a $c$-shallow hitting set $S_2$ for depth $ck-c$, then remove $S_2$ from $\F$ and so on, yielding some disjoint subfamilies $S_1, \dots, S_k$.
It is now easy to check that any point covered by $ck$ rectangles is covered by at least $1$ rectangle from each of $S_1, \dots, S_k$: $m_k^* \leq ck$ for $\F$.
\begin{proof}[Proof of \autoref{thm:intersecting}]
Let $d$ be arbitrary. 
We will construct hitting sets $S_L$ and $S_R$, and show that $S = S_L \cup S_R$ is a $3$-shallow hitting set for depth $d$.
Order the points at depth exactly $d$ from below, and we will add rectangles to $S_L$ and $S_R$ in this order.
If $p$ is a point at depth $d$ on side $s$ that is not yet covered by any rectangle of $S_s$, add to $S_s$ the rectangle covering $p$ whose extension to the other side is shortest.
Once we are finished constructing $S_L$ and $S_R$ in this order, we reduce $S = S_L \cup S_R$ so that it is a minimal hitting set for depth $d$.

To show that it is $3$-shallow, we show that every point $p$ at depth $d$ on side, say $L$, is covered by at most $2$ rectangles from $S_L$ and at most $1$ from $S_R$.
This is by minimality; if $p$ is covered by $T_1, T_2, T_3 \in S_L$, removing the one of the lower two whose left endpoint is closer to the spine preserves that $S$ is a hitting set.
(This is not true without the fact that $\F$ is intersecting.)
If $p$ is contained by two rectangles $R_1, R_2 \in S_R$, suppose that $y(R_1) > y(R_2)$.
Since we chose $R_1$ for $S_R$ from below, there must be a point $q$ on the right side that is covered by $R_1$ but is above $R_2$.
As a result, there are $d-1$ other rectangles covering $q$ whose heights are between $R_1$ and $R_2$.
By the choice of $R_2$ for $q$ by minimality of its left endpoint, the left endpoints of these $d-1$ rectangles extend beyond the left endpoint of $R_2$, so they cover $p$.
This gives $d+1$ rectangles that cover $p$, a contradiction.
\end{proof}
\begin{figure}[H]
    \begin{center}
    \begin{tikzpicture}[scale=0.7]
    \node[circle, fill=red,inner sep=0pt,minimum size=3pt,label=below:{\small$v$}] at (0,0) {};
    \draw[dashed, color=red] (0,0)--(0,5);
    \draw (-1,0)--(-1,4)--(2,4)--(2,0);
    \node[left] at (-1,4) {\footnotesize{$T_1$}};
    \draw (-2,0)--(-2,3)--(3,3)--(3,0);
    \node[left] at (-2,3) {\footnotesize{$T_2$}};
    \draw (-3,0)--(-3,2)--(1.5,2)--(1.5,0);
    \node[left] at (-3,2) {\footnotesize{$T_3$}};
    \node[circle, fill=black,inner sep=0pt,minimum size=3pt,label=below:{\small$p$}] at (-0.5,1.5) {};
    \end{tikzpicture}
    \hspace{1cm}
    \begin{tikzpicture}[scale=0.7]
    \node[circle, fill=red,inner sep=0pt,minimum size=3pt,label=below:{\small$v$}] at (0,0) {};
    \draw[dashed, color=red] (0,0)--(0,5);
    \draw (-2,0)--(-2,4)--(1,4)--(1,0);
    \node[right] at (1,4) {\footnotesize{$R_1$}};
    \draw (-1,0)--(-1,2.5)--(3,2.5)--(3,0);
    \node[right] at (3,2.5) {\footnotesize{$R_2$}};
    \node[circle, fill=black,inner sep=0pt,minimum size=3pt,label=below:{\small$p$}] at (-0.5,1.5) {};
    \node[circle, fill=black,inner sep=0pt,minimum size=3pt,label=below:{\small$q$}] at (0.5,3) {};
    \draw[dotted] (-1.5,0)--(-1.5,3.75)--(2.5,3.75)--(2.5,0);
    \node[right] at (2.5,3.7 ) {\footnotesize{$\ddots$}};
    \draw[dotted] (-2.5,0)--(-2.5,3.25)--(3.5,3.25)--(3.5,0);
    \end{tikzpicture}
    \caption{\small{(left) $T_2$ is not needed to hit points at depth $d$ on the left, and (right) there are $d-1$ rectangles between $R_1$ and $R_2$ that cover $p$.}}\label{fig:intersecting}
    \end{center}
\end{figure}
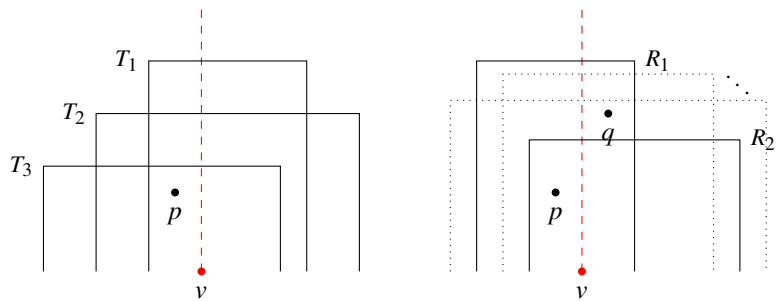
\section{Arborescences}\label{sec:arb}

The goal for this section is to prove \autoref{thm:arb1},
\thmarbone*
and to show that \autoref{thm:rect} can be deduced from it.
\thmrect*
We would like to associate to each family $\F$ of rectangles a simple graph, and derive a polychromatic colouring of $\F$ from a suitable colouring of this graph.
First we will define the family of graphs that we consider (arborescences), prove \autoref{thm:arb1}, then show how these graphs are obtained from bottomless rectangles.
\subsection{The setup}

An arborescence is a directed tree with a distinguished \textit{root} vertex such that all edges are directed away from the root, i.e.\ there is a unique directed path from the root to any vertex.
We denote the length of the shortest directed path from $u$ to $v$, if it exists, by $dist(u,v)$. 
Recall that the length of a path is the number of edges, or one less than the number of vertices.
A disjoint union of arborescences is called a \textit{branching}.
We say that an ordering of the vertices of a branching is \emph{root-to-leaf} if every vertex is preceded by its in-neighbors and succeeded by its out-neighbors; in particular, from every component first the root is presented and last a leaf.

\begin{claim}
	The vertices of any branching can be $k$-coloured by an online algorithm in a root-to-leaf order such that any directed path on $k$ vertices contains all $k$ colours.
\end{claim}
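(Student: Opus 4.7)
The plan is to give a simple online rule: when a vertex $v$ is presented (in root-to-leaf order), let $A(v)$ denote the set of at most $k-1$ vertices consisting of the parent, grandparent, $\ldots$, up to the $(k-1)$-th ancestor of $v$ along the unique directed path from the root of $v$'s component. All vertices in $A(v)$ precede $v$ in any root-to-leaf order, so they have already been coloured when $v$ arrives. Since $|A(v)| \le k-1 < k$, some colour in $\{1,\ldots,k\}$ does not appear on $A(v)$; assign $v$ any such colour. This is clearly online.

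To verify correctness I would prove, by induction on the number of processed vertices, the stronger invariant that at every stage every directed path on at most $k$ coloured vertices is \emph{rainbow}, i.e.\ uses pairwise distinct colours. The base case (no coloured vertex) is vacuous. For the inductive step, suppose the invariant holds and $v$ is coloured next. Any directed path $P$ on at most $k$ vertices in the newly coloured set either does not use $v$, in which case it is rainbow by the induction hypothesis, or ends at $v$. In the latter case, $P$ has the form $u_1 \to u_2 \to \cdots \to u_j = v$ with $j-1 \le k-1$, so $\{u_1, \ldots, u_{j-1}\} \subseteq A(v)$. By induction the colours of $u_1, \ldots, u_{j-1}$ are distinct, and by the construction the colour of $v$ differs from all colours appearing on $A(v)$, hence from all of $u_1, \ldots, u_{j-1}$. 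So $P$ is rainbow.

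Applying the invariant to a directed path on exactly $k$ vertices shows that such a path uses $k$ distinct colours from a palette of size $k$, i.e.\ contains all $k$ colours, which is the desired conclusion. There is essentially no obstacle: the only minor remark is that $|A(v)|$ can be strictly less than $k-1$ (e.g.\ near the root of a component), but then the rule simply has more free colours to choose from and the argument is unchanged.
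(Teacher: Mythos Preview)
Your proof is correct. The invariant and the induction are clean, and you correctly use that in a root-to-leaf order the newly presented vertex $v$ is a leaf of the current forest, so any directed path through $v$ must end at $v$.

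The paper's argument is slightly different and even simpler: instead of a greedy rule, it colours each vertex $v$ by $dist(r,v)\bmod k$, where $r$ is the root of $v$'s component. Along any directed path on $k$ vertices the distances to the root are $k$ consecutive integers, so all residues---hence all colours---appear. This rule is a particular instance of your greedy rule (when $|A(v)|=k-1$ it picks the unique missing colour), but it has the advantage that the algorithm need not inspect the colours of the ancestors at all; it only needs the depth of $v$, which is determined by the root-to-leaf presentation. Your version, on the other hand, makes the key invariant (every short path is rainbow) explicit and would still work if, say, the palette were larger than $k$.
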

\begin{proof}
If a root is presented, colour it with colour $1$.
Every time a new vertex $v$ is presented in the component with root $r$, colour $v$ according to the parity of $dist(r,v) \Mod{k}$ (which can be determined from a root-to-leaf ordering).
\end{proof}

We call the reversal of a root-to-leaf ordering a \textit{leaf-to-root} ordering; from each component, first a leaf is presented and last the root.
Our main result, \autoref{thm:arb1}, shows that the converse of the above claim fails: any semi-online algorithm will in fact leave arbitrarily long monochromatic paths.
In order to apply this result to bottomless rectangles, however, we will need a stronger condition on the leaf-to-root ordering.

For two vertices $u$ and $v$ of a branching, say $u<v$ if they are in the same component and there is a directed path from $u$ to $v$.
This forms a partial order where the roots are the minimal elements and the leaves the maximal.
A leaf-to-root ordering is a linear extension of this partial order that presents the $<$-maximal element first.

If $u<v$ and there are no other vertices between them, i.e.\ $uv$ is a directed edge, write $u \lessdot v$ and say that $v$ is a \emph{parent} of $u$.
(Thus, somewhat contradicting the laws of nature, every vertex can have only one child, but several parents.)
When presenting the vertices of a branching in a leaf-to-root order, the newly presented vertex $u$ will always form a root, while its parents were all roots of the branching before $u$ was presented.

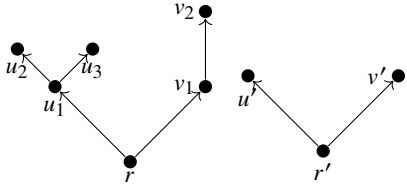
\begin{figure}[H]
	\centering{
		\begin{tikzpicture}
		[scale=1,auto=left,every node/.style={circle, minimum size = 5pt, inner sep = 0,  fill=black!}]
		\node[label=below:{\footnotesize{$r$}}] (r) at (0,0) {};
		\node[label=below:{\footnotesize{$u_1$}}] (u1) at (-1,1)  {};
		\node[label=below:{\footnotesize{$u_3$}}] (u3) at (-0.5,1.5)  {};
		\node[label=below:{\footnotesize{$u_2$}}] (u2) at (-1.5,1.5) {};
		\node[label=left:{\footnotesize{$v_1$}}] (v1) at (1,1)  {};
		\node[label=left:{\footnotesize{$v_2$}}] (v2) at (1,2)  {};	
		
		\foreach \from/\to in {r/u1, u1/u2, u1/u3, r/v1, v1/v2}
		\draw[->] (\from) -- (\to);

		\end{tikzpicture}
		\hspace{0.1cm}
		\begin{tikzpicture}
		[scale=1,auto=left,every node/.style={circle, minimum size = 5pt, inner sep = 0,  fill=black!}]
		\node[label=below:{\footnotesize{$r'$}}] (r) at (0,0) {};
		\node[label=below:{\footnotesize{$u'$}}] (u1) at (-1,1)  {};
		
		\node[label=left:{\footnotesize{$v'$}}] (v1) at (1,1)  {};
		
		\foreach \from/\to in {r/u1, r/v1}
		\draw[->] (\from) -- (\to);

		\end{tikzpicture}
		
		\caption{\small{A branching with roots $r$ and $r'$. In this example, $u_1 \gtrdot r$, i.e.\ $u_1$ is a parent of $r$, but $u_2$ is not a parent of $r$ even though $u_2 > r$ ($u_2$ is a ``grandparent'' of $r$), and $v'\not>r$. A linear extension of this (or a leaf-to-root ordering) might present the vertices $u', v'$ and $r'$ before $u_3$, so it is not necessary that the roots of the branching are the last vertices presented.}}\label{fig:branching}
	}
\end{figure}

Denote the roots of the branching before a new vertex $u$ is presented by $v_1,v_2,\ldots$ indexed in the order in which they were presented.
We say that a \emph{leaf-to-root} ordering is \emph{geometric} if the parents of $u$ form an interval in this order, i.e., for every $u$, $\{v_i\mid u \lessdot v_i\}=\{v_i \mid l<i<r\}$ for some $l$ and $r$.
Intuitively, we do not allow an ordering of the following type.
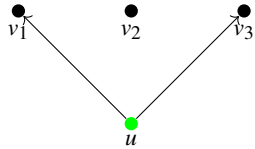
\begin{figure}[H]
		\centering{
				\begin{tikzpicture}
				[scale = 0.5, every node/.style={circle, minimum size = 5pt, inner sep = 0,  fill=black!}]
				\node[label=below:{\footnotesize{$v_1$}}] (p) at (1,4) {};
				\node[label=below:{\footnotesize{$v_2$}}] (q) at (4,4) {};
				\node[label=below:{\footnotesize{$v_3$}}] (r) at (7,4) {};
				\node[label=below:{\footnotesize{$u$}}, color=green] (s) at (4,1) {};
				
				\foreach \from/\to in {s/r, s/p}
				\draw[->] (\from) -- (\to);
				
				\end{tikzpicture}
				\caption{\small{The order $v_1, v_2, v_3, u$ is a leaf-to-root order but it is not \textit{geometric}, because $u$ is adjacent to $v_1$ and $v_3$ but not to $v_2$.}}
		}
		\end{figure}

Now we state a stronger form of \autoref{thm:arb1}.
\begin{theorem}\label{thm:arb2}
For any numbers $m,k$, there is no semi-online $k$-colouring algorithm that receives the vertices of an arborescence in a geometric leaf-to-root order and maintains that at every stage, all directed paths of length $m$ contain all $2$ colours. 
\end{theorem}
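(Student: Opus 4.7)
The plan is to prove Theorem~\ref{thm:arb2} by induction on the number of colours $k$, with the inductive step driven by a diagonalisation on the algorithm's colour choices.

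For the base case $k=1$, present a single directed path on $m+1$ vertices in leaf-to-root order. With only one colour available, no directed $m$-path can contain two distinct colours, and the maintained invariant is violated as soon as the final vertex is revealed.

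For the inductive step, assume the theorem for $k-1$ colours (and every target length). Let $A$ be a semi-online $k$-colouring algorithm. The strategy is a ``colour-burning'' reduction: force $A$ to commit a colour $c$ near the top of the arborescence so that in the sub-arborescence grown below, $A$ is effectively restricted to the other $k-1$ colours, after which the inductive hypothesis applies. Concretely, the adversary first grows, left to right as side-by-side branches of the branching, many probe sub-trees $G_1, \ldots, G_N$ with $N$ enormous compared to $k$. Each $G_i$ is designed using the inductive $(k-1)$-strategy to extract enough colouring commitments from $A$ so that a designated root ``witness'' $w_i$ receives a colour before the merging phase begins. Pigeonholing on the witness colours isolates a long run of witnesses sharing a common colour $c$. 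The adversary then introduces a new vertex $u$ whose children (the paper's ``parents'') are exactly this run, and plays the inductive $(k-1)$-strategy for target length $m$ in the sub-arborescence rooted at $u$. Any colour-$c$ vertex $v$ appearing in that sub-arborescence close to a colour-$c$ witness forces $A$ to insert further non-$c$ colours between them to honour the invariant, effectively banning colour $c$ along the sub-arborescence's deep paths; the inductive hypothesis then produces a monochromatic $m$-path in some colour $c'\neq c$.

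The main obstacle, and the reason a direct Erd\H{o}s--Szekeres incremental argument (as used in \cite{CKMU}) does not suffice, is the geometric constraint that the children of each newly revealed vertex must form a contiguous interval of current roots. After pigeonholing on witness colour, the selected witnesses may be interleaved with witnesses of other colours and thus not form such an interval, so they cannot be merged under a single vertex $u$. The diagonalisation step deals with this by arranging the probes in a nested hierarchy (reminiscent of a Cartesian-tree decomposition) so that for every possible colour $c$ there is automatically a contiguous block of witnesses of colour $c$ available for merging. Verifying that this hierarchy can be built while still respecting the geometric leaf-to-root ordering -- and that the many parallel invocations of the inductive $(k-1)$-strategy do not interfere with one another through shared paths -- is the technical heart of the argument.
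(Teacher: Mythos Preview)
Your inductive plan has a genuine gap at the heart of the ``colour-burning'' step. Suppose you succeed in producing a contiguous block of witnesses $w_i$ all coloured $c$, and you introduce $u$ below them. You then claim that in the sub-arborescence grown below $u$ the algorithm $A$ is ``effectively restricted to the other $k-1$ colours'', so that the inductive hypothesis applies. But $A$ still has all $k$ colours available below $u$. The presence of colour $c$ at the witnesses constrains only those $m$-paths that reach up to a witness, and even for those the invariant asks merely for a second colour \emph{somewhere} on the path; a single non-$c$ vertex at or near $u$ suffices, after which $A$ may again use $c$ freely further down. Paths of length $m$ lying entirely below $u$ are subject to exactly the same invariant as before and may use colour $c$ without restriction. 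So you have not reduced $A$ to a $(k-1)$-colouring algorithm, and the inductive hypothesis cannot be invoked. A second, related gap is the claim that each root witness $w_i$ can be forced to receive a colour: a semi-online algorithm may leave any particular vertex uncoloured forever, provided every $m$-path carries two colours, and ``using the inductive $(k-1)$-strategy'' inside $G_i$ says nothing about $A$, which is a $k$-colouring algorithm.

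For comparison, the paper's proof uses no induction on $k$. It associates to each newly presented root $p$ a bounded-depth ``trimmed'' coloured tree $\T(p)$ recording, up to depth $m$, the maximal monochromatic chains in each colour sitting above $p$; there are only finitely many isomorphism types of such trees. The key lemma shows that if a new root $p$ is presented below roots $p_1,\ldots,p_s$ with pairwise non-isomorphic trimmed trees, then $\T(p)$ is non-isomorphic to every $\T(p_i)$: a putative isomorphism $\varphi:\T(p)\to\T(p_i)$ is iterated to build an uncoloured chain $p\lessdot \varphi(p)\lessdot \varphi^2(p)\lessdot\cdots$ of length $m$, contradicting $m$-properness. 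An infinite-descent argument on an ``associated sequence'' of the branching then collides with the finiteness of the set of trimmed-tree types.
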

Call a semi-online $k$-colouring algorithm \emph{$m$-proper} if any path on $m$ vertices contains at least two colours. 
The theorem states there is no $m$-proper semi-online $k$-colouring algorithm for arborescences presented in geometric leaf-to-root order.
The idea of the proof is that for any vertex $u$, there are only finitely many possibilities for all directed paths of length $m$ from $u$.
However, we can always force the algorithm to produce a new ``type'' of path, leading to a contradiction.

Fix $k$ colours, $C_1, \dots, C_k$, a branching \F with a geometric leaf-to-root order, a point $p \in V(\F)$, and the time $t$ at which $p$ appears.
To ease future notation, let us get some (many) definitions out of the way.
\begin{itemize}
\item $p_u$ is a \textit{$u$-parent} of $p$ if there is a directed path $(p,p_1, \dots, p_u)$, i.e.\ $dist(p,p_u)=u$ in the graph.
We refer to the subpath $(p_1, \dots, p_u)$ as the \emph{chain} corresponding to $p_u$.
\item A $u$-parent $p_u$ of $p$ is \emph{in $C_i$} if $p_u$ is a $u$-parent of $p$ and every point in the chain $(p_1, \dots, p_u)$ is coloured with $C_i$ at time $t$. (Note that $p$ itself need not have colour $C_i$.)
\item A $u$-parent $p_u$ of $p$ in $C_i$ is \textit{maximal} if there is no $p_{u+1} \gtrdot p_u$ that is also coloured with $C_i$ at time $t$ (note that this depends \textit{only} on $t$, even if some such $p_{u+1}$ is coloured later).
\item Similarly, $p_u$ is an \textit{uncoloured} $u$-parent of $p$ if every point of $(p_1, \dots, p_u)$ is uncoloured, and it is a \textit{maximal} uncoloured $u$-parent if there is no $p_{u+1} \gtrdot p_u$ that is also uncoloured.
\item 
The \textit{type} of $p$, $tp(p)$ is defined as the vector $(t_1, \dots, t_k) \in \mathbb{N}^k$, where
\[t_i = \max \{u: p \text{ has a maximal } u\text{-parent in } C_i\}\]
\item If two partially coloured trees, $\T_1$ and $\T_2$, are isomorphic, we write $\T_1 \cong \T_2$.
Note that for the isomorphism we require that vertices coloured, say red, must be mapped to red vertices - we do not allow the isomorphism to permute the colours.
\end{itemize} 

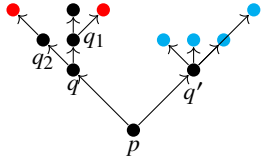
\begin{figure}[H]
	\vspace{0.2cm}
	\centering{
		\begin{tikzpicture}
		[auto=left,every node/.style={circle, minimum size = 5pt, inner sep = 0,  fill=black!},scale=0.8]
		\node[label=below:{\footnotesize{$p$}}] (p) at (0,0) {};
		
		\node[label=below:{\footnotesize{$q$}}] (q) at (-1,1) {};
		
		\node[label=below:{\footnotesize{$q_2$}}] (q2) at (-1.5,1.5) {};
		\node[label=below:{\footnotesize{}}, fill=red] (rq2) at (-2,2) {};
		
		\node[label=right:{\footnotesize{$q_1$}}] (q1) at (-1, 1.5) {};
		\node[label=below:{\footnotesize{}}] (d) at (-1,2) {};
		\node[label=below:{\footnotesize{}}, fill=red] (rq1) at (-0.5,2) {};
		
		\node[label=below:{\footnotesize{$q'$}}] (q') at (1,1) {};
		
		\node[label=below:{\footnotesize{}}, fill=cyan] (b1) at (0.5,1.5) {};
		\node[label=below:{\footnotesize{}}, fill=cyan] (b2) at (1,1.5) {};
		\node[fill=cyan] (2b1) at (1.5,1.5) {};
		\node[label=below:{\footnotesize{}}, fill=cyan] (2b2) at (2,2) {};

		\foreach \from/\to in {p/q, q/q1, q/q2, q1/d, q1/rq1, q2/rq2, p/q', q'/b1, q'/b2, q'/2b1, q'/2b2}
		\draw[->] (\from) -- (\to);

		\end{tikzpicture}
		\caption{\small{Suppose this is the partially $2$-coloured arborescence when $p$ appears. $q$ is an uncoloured $1$-parent of $p$, while $q_2$ is a \textit{maximal} uncoloured $2$-parent.
		The type of $p$ is $(0,0)$. We cannot say anything about the type of $q'$, because this depends on the time at which $q'$ appears.
		If the blue points were coloured \textit{when} $q'$ appeared, then $tp(q') = (0,2)$. If they were only coloured when $p$ appeared, then $tp(q') = (0,0)$, even though it now has blue points above it.}}
		}
	\end{figure}	

Let $S_t$ be the set of points that have appeared by time $t$ in the same connected component of $\mathcal{F}$ as $p$ (or in the subtree rooted at $p$ at time $t$). We now associate to $p$ a tree $\T(p)$ by ``trimming'' the induced subgraph $\mathcal{F}[S_t]$ in the following steps. (See \autoref{fig:trimming}.)

\begin{enumerate}
	\item If $q$ is uncoloured and $dist(p,q) > m$, delete $q$.
	\item If $q_1$ and $q_2$ are both maximal $t_i$-parents in $C_i$ for some remaining $q$, delete $q_2$ and all points that are $>q_2$.
	\item For $i=1,\ldots m$, if $q$ is a $(m-i)$-parent of $p$, and $q_1 \gtrdot q$ and $q_2 \gtrdot q$ are such that the subtrees rooted at $q_1$ and $q_2$ are isomorphic, delete $q_2$.
\end{enumerate}

\begin{figure}[H]
	\vspace{0.2cm}
	\centering{
		\begin{tikzpicture}
		[scale=0.7,auto=left,every node/.style={circle, minimum size = 5pt, inner sep = 0,  fill=black!}]
		\node[label=below:{\footnotesize{$p$}}] (p) at (0,0) {};
		
		\node[label=below:{\footnotesize{$q$}}] (q) at (-1,1) {};
		
		\node[label=below:{\footnotesize{$q_2$}}] (q2) at (-1.5,1.5) {};
		\node[label=below:{\footnotesize{}}, fill=red] (rq2) at (-2,2) {};
		
		\node[label=right:{\footnotesize{$q_1$}}] (q1) at (-1, 1.5) {};
		\node[label=below:{\footnotesize{}}] (d) at (-1,2) {};
		\node[label=below:{\footnotesize{}}, fill=red] (rq1) at (-0.5,2) {};
		
		\node[label=below:{\footnotesize{$q'$}}] (q') at (1,1) {};
		
		\node[label=below:{\footnotesize{}}, fill=cyan] (b1) at (0.5,1.5) {};
		\node[label=below:{\footnotesize{}}, fill=cyan] (b2) at (1,1.5) {};
		\node[fill=cyan] (2b1) at (1.5,1.5) {};
		\node[label=below:{\footnotesize{}}, fill=cyan] (2b2) at (2,2) {};

		\foreach \from/\to in {p/q, q/q1, q/q2, q1/d, q1/rq1, q2/rq2, p/q', q'/b1, q'/b2, q'/2b1, q'/2b2}
		\draw[->] (\from) -- (\to);

		\end{tikzpicture}
		\hspace{0.2cm}
		\begin{tikzpicture}
		[scale=0.7,auto=left,every node/.style={circle, minimum size = 5pt, inner sep = 0,  fill=black!}]
		\node[label=below:{\footnotesize{$\rightarrow$}} ,fill=none] at (-3,1) {};
		\node[label=below:{\footnotesize{step 1}} ,fill=none] at (-3,1) {};
		\node[label=below:{\footnotesize{$p$}}] (p) at (0,0) {};
		\node[label=below:{\footnotesize{$q$}}] (q) at (-1,1) {};
		
		\node[label=below:{\footnotesize{$q_2$}}] (q2) at (-1.5,1.5) {};
		\node[label=below:{\footnotesize{}}, fill=red] (rq2) at (-2,2) {};
		
		\node[label=right:{\footnotesize{$q_1$}}] (q1) at (-1, 1.5) {};
		
		\node[label=below:{\footnotesize{}}, fill=red] (rq1) at (-0.5,2) {};
		
		\node[label=below:{\footnotesize{$q'$}}] (q') at (1,1) {};
		
		\node[label=below:{\footnotesize{}}, fill=cyan] (b1) at (0.5,1.5) {};
		\node[label=below:{\footnotesize{}}, fill=cyan] (b2) at (1,1.5) {};
		\node[fill=cyan] (2b1) at (1.5,1.5) {};
		\node[label=below:{\footnotesize{}}, fill=cyan] (2b2) at (2,2) {};

		\foreach \from/\to in {p/q, q/q1, q/q2, q1/rq1, q2/rq2, p/q', q'/b1, q'/b2, q'/2b1, q'/2b2}
		\draw[->] (\from) -- (\to);
		
		\end{tikzpicture}
		\hspace{0.2cm}
		\begin{tikzpicture}
		[scale=0.7,auto=left,every node/.style={circle, minimum size = 5pt, inner sep = 0,  fill=black!}]
		\node[label=below:{\footnotesize{$\rightarrow$}} ,fill=none] (p) at (-3,1) {};
		\node[label=below:{\footnotesize{step 2}} ,fill=none] at (-3,1) {};
		\node[label=below:{\footnotesize{$p$}}] (p) at (0,0) {};
		\node[label=below:{\footnotesize{$q$}}] (q) at (-1,1) {};
		
		\node[label=below:{\footnotesize{$q_2$}}] (q2) at (-1.5,1.5) {};
		\node[label=below:{\footnotesize{}}, fill=red] (rq2) at (-2,2) {};
		
		\node[label=right:{\footnotesize{$q_1$}}] (q1) at (-1, 1.5) {};
		
		\node[label=below:{\footnotesize{}}, fill=red] (rq1) at (-0.5,2) {};
		
		\node[label=below:{\footnotesize{$q'$}}] (q') at (1,1) {};
		
		\node[label=below:{\footnotesize{}}, fill=cyan] (b1) at (0.5,1.5) {};
		\node[fill=cyan] (2b1) at (1.5,1.5) {};
		\node[label=below:{\footnotesize{}}, fill=cyan] (2b2) at (2,2) {};

		\foreach \from/\to in {p/q, q/q1, q/q2, q1/rq1, q2/rq2, p/q', q'/b1, q'/2b1, q'/2b2}
		\draw[->] (\from) -- (\to);
		
		\end{tikzpicture}
		\hspace{0.2cm}
		\begin{tikzpicture}
		[scale=0.7,auto=left,every node/.style={circle, minimum size = 5pt, inner sep = 0,  fill=black!}]
		\node[label=below:{\footnotesize{$\rightarrow$}} ,fill=none] () at (-3,1) {};
		\node[label=below:{\footnotesize{step 3}} ,fill=none] at (-3,1) {};
		\node[label=below:{\footnotesize{$p$}}] (p) at (0,0) {};
		\node[label=below:{\footnotesize{$q$}}] (q) at (-1,1) {};

		\node[label=right:{\footnotesize{$q_1$}}] (q1) at (-1, 1.5) {};
		
		\node[label=below:{\footnotesize{}}, fill=red] (rq1) at (-0.5,2) {};
		
		\node[label=below:{\footnotesize{$q'$}}] (q') at (1,1) {};
		
		\node[label=below:{\footnotesize{}}, fill=cyan] (b1) at (0.5,1.5) {};
		\node[fill=cyan] (2b1) at (1.5,1.5) {};
		\node[label=below:{\footnotesize{}}, fill=cyan] (2b2) at (2,2) {};

		\foreach \from/\to in {p/q, q/q1, q1/rq1,  p/q', q'/b1, q'/2b1, q'/2b2}
		\draw[->] (\from) -- (\to);
		
		\end{tikzpicture}
		
		\caption{\small{Example for trimming with $m=k=2$. In step 1, we delete the uncoloured $3$-parent of $p$, but preserve the red parent of $q_1$. In step 2, we ``trim'' the blue parents of $q'$. In step $3$, the subtrees rooted at $q_1$ and $q_2$ are isomorphic, so we delete $q_2$.}}\label{fig:trimming}
	}
	\vspace{0.2cm}
\end{figure}
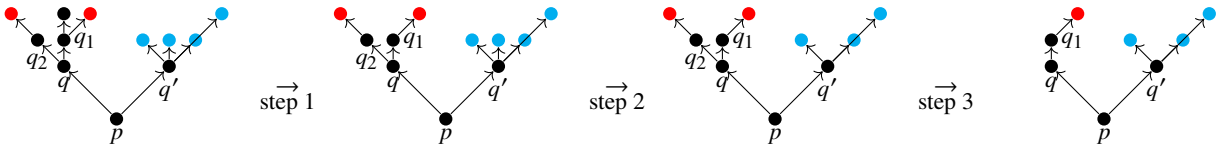 

The idea of this trimming process is to retain only the ``essential'' information about the colouring when $p$ appears and reduce the number of possible $\T(p)$ to a bounded number of options.
If we assume that the algorithm has produced a $m$-proper colouring until the time that $p$ appears, then we can disregard vertices at distance $>m$ from $p$.
If a vertex was not deleted during the trimming, we say that it was \emph{preserved}.

We could modify step $1$ to delete \textit{all} points at distance $> m$ from $p$. However, in the proof we will use the fact that the type of any point at distance $\leq m$ from $p$ is preserved. Of course, if the algorithm is good, then any directed path of length $m$ contains at least $2$ colours, so deleting only the uncoloured points is just a technical condition that simplifies notation.
Finally, in step $3$, we ensure that we do not have any ``repetitions''. For example, if all the branches rooted at $p$ are isomorphic, by considering only one of them we do not lose any important information.

We emphasise that $\T(p)$ depends only on the time at which $p$ appears. For instance, in the above figure, even if $q'$ is coloured blue at a later time, $\T(p)$ does not change. 

\subsection{Proof of the main theorem}

The crucial result of the trimming process is that the following lemma holds.
\begin{lemma}\label{lem:trim}
	Suppose that a semi-online colouring algorithm as in the statement of the theorem exists. Then the following hold.
	
	\begin{enumerate}
		
		\item The set
		$\{\T(p): \mathcal{F} \text{ is a branching}, p \in V(\mathcal{F})\}$ 
		is finite.
		\item  If $q \in S_t$ is preserved after the trimming, and $q$ had an $t_i$-parent in $C_i$ in $\mathcal{F}$, then $q$ has an $t_i$-parent in $C_i$ in $\T(p)$. In particular, the type of $q$ is preserved.
		\item Suppose $p' \lessdot p$ is presented, and $q$ was an uncoloured $u$-parent of $p$ in $\T(p)$ for $u<d$. If none of the points on the chain from $p'$ to $q$ are coloured when $p'$ is presented, then $q$ is preserved in $\T(p')$. 
	\end{enumerate}
\end{lemma}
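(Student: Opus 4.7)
My plan is to verify the three parts in order, with most of the technical work concentrated in (1); parts (2) and (3) reduce to routine case analyses against the trimming rules. For (1), I would show that $\T(p)$ has depth and branching bounded by functions of $m$ and $k$, which forces finitely many isomorphism classes. Step 1 immediately confines every uncolored vertex to the distance-$m$ ball around $p$. The assumed $m$-properness of the algorithm forces every maximal monochromatic chain to have at most $m-1$ vertices, so each $t_i\in\{0,\dots,m-1\}$; applying step 2 to convergence then collapses multiple maximal chains of the same color at each vertex to a single one, and step 3 kills isomorphic sibling subtrees at the first $m$ levels above $p$. From this normalization I would run a downward induction on depth: if $N_d$ denotes the number of isomorphism classes of trimmed subtrees rooted at distance $d$ from $p$, then $N_d$ is bounded in terms of $N_{d+1}$, $m$, and $k$. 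The hard part will be the depth bound---after the uncolored core of radius $m$, only colored chains extend further, and controlling how deep their interleavings go using step 2 and $m$-properness is the main obstacle.

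For (2), I would check that each trimming step preserves the type of every surviving vertex. Step 1 deletes only uncolored vertices, which by definition never lie on any monochromatic chain, so no $t_i$-witness is destroyed. Step 2 deletes one of two witnesses $q_1,q_2$ to a maximal $t_i(q)$-parent in $C_i$ at some $q$, but the surviving witness $q_1$ still realizes the same chain length, so $t_i(q)$ is unchanged; an induction on depth then shows every ancestor of a preserved vertex keeps its type. Step 3 deletes $q_2$ whose subtree is color-isomorphic to that of $q_1$, so the same set of maximal chains in every color is available via $q_1$, again preserving all types above.

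For (3), I would rule out each trimming step from deleting any vertex on the chain $p,p_1,\dots,p_u=q$ inside $\T(p')$, since deleting any of them would already disconnect $q$ from $p'$. Because $u+1\le m$, step 1 spares $q$. Because $q$ is uncolored and the entire chain is uncolored at time $p'$, none of its vertices can be a maximal $t_i$-parent of anything in any color, so step 2 cannot target them directly; moreover, every vertex deleted by step 2 is colored, so such a deletion cannot be a vertex on our uncolored chain, and hence cannot propagate through it to $q$. For step 3 I would exploit the freedom to pick which of two isomorphic siblings to delete: at each chain vertex $p_{j-1}$ within distance $m-1$ of $p'$, if step 3 would pair $p_j$ with an isomorphic sibling, we can delete the sibling instead of $p_j$. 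Making these choices consistently along the entire chain preserves $p_1,\dots,p_u=q$ in $\T(p')$.
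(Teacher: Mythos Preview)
Your proposal is correct and follows essentially the same approach as the paper's own proof: bound depth via $m$-properness and step~1, bound branching via steps~2 and~3, and for part~(3) exploit the freedom in step~3 to delete the \emph{other} isomorphic sibling so that the designated chain survives. Your treatment is in fact more thorough than the paper's (which dispatches part~(2) as ``follows from our earlier argument'' and only addresses the step~3 case in part~(3)); the one phrasing to tighten is in your step~2 analysis for part~(3): not every vertex deleted by step~2 is coloured (the upward closure of $q_2$ may contain uncoloured vertices), but your intended argument---that the targeted $q_2$ is coloured and hence cannot lie on the all-uncoloured chain from $p'$ to $q$, so no ancestor of $q_2$ can either---is correct.
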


\begin{proof}
    We show that there are only finitely many possibilities for $\T(p)$. In step $1$ of the trimming we delete uncoloured points at distance $>m$ from $p$. In step $2$ we preserve only maximal parents in $C_i$ of $p$ for each colour $C_i$. Since the algorithm is $m$-proper, $\T(p)$ will have depth at most $m$. In step $2$, we also delete ``repetitions'' so there are only finitely many possibilities for each of the branches above $p$. And in step $3$, we delete isomorphic subtrees, so no two of the branches above $p$ are isomorphic. Thus $\T(p)$ can take only finitely many values.
	
	The second claim follows from our earlier argument.
	For the third claim, we only need to consider the case when the algorithm produces an uncoloured $u$-parent $q'$ such that one of $q$ and $q'$ must be trimmed (i.e., the subtrees rooted at $q$ and $q'$ are isomorphic). In this case, we can  assume without loss of generality that $q'$ is trimmed so the second property holds.
\end{proof}

\begin{lemma}\label{lem:descent}
	At any stage of the algorithm, suppose that we have a collection of trees with roots $p_1,\ldots,p_s$ presented in this order such that no two $\T(p_i)$ and $\T(p_j)$ are isomorphic.
	Then presenting a vertex $p$ with parents $p_1,\ldots,p_s$, will give a tree $\T(p)$ that is non-isomorphic to any $\T(p_i)$.
\end{lemma}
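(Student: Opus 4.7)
My plan is to argue by contradiction. Suppose $\T(p) \cong \T(p_j)$ for some $j\in\{1,\ldots,s\}$ via an isomorphism $\phi$. Since each arborescence has its root as the unique $<$-minimal vertex, $\phi(p)=p_j$, and $\phi$ restricts to a bijection (preserving isomorphism classes) between the immediate subtrees of $\T(p)$ rooted at the children of $p$ and the analogous subtrees of $\T(p_j)$ at its root.

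The core technical step will be an encoding claim: for any presented vertex $v$ and any surviving immediate child $u$ of $v$ in $\T(v)$, the subtree of $\T(v)$ rooted at $u$ is isomorphic to $\T(u)$ itself. The intuition is that in a leaf-to-root order the branching above $u$ is already completely determined by time $t_u$ (parents are presented before children, so everything $>u$ is in place at $t_u$), so the underlying vertices and edges of the subtree above $u$ in $\T(v)$ agree with those in $\T(u)$; and by \autoref{lem:trim} parts 2 and 3, the types of the maximal coloured chains and the uncoloured parents of $u$ survive into $\T(v)$. The delicate point is the one-unit shift in the distance-threshold used in step 1 of the trimming (cutoff at distance $m$ from $v$ rather than from $u$), together with the corresponding shift in the iso-trimming of step 3; I would verify that these two shifts interact at the outer boundary to yield isomorphic outputs.

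Granted the encoding claim, the contradiction is immediate. Applied to $\T(p)$ at its children, the claim gives that the multiset of iso classes of the immediate subtrees equals $\{[\T(p_i)] : i\in A\}$, where $A$ indexes the surviving children; by the pairwise non-iso hypothesis, step 3 of the trimming removes none of them and $A=\{1,\ldots,s\}$. Applied to $\T(p_j)$ at its children, the analogous multiset is $\{[\T(q_l)]\}$ for the surviving parents $q_l$ of $p_j$; since each such subtree is a proper subtree of $\T(p_j)$ (it omits the root $p_j$), comparing sizes forces $[\T(q_l)]\neq[\T(p_j)]$ for every $l$. The isomorphism $\phi$ matches these two multisets, yielding $\{[\T(p_i)]\}=\{[\T(q_l)]\}$. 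But $j\in\{1,\ldots,s\}$ and $[\T(p)]=[\T(p_j)]$, so $[\T(p_j)]$ lies in the left-hand multiset and hence in the right-hand one, contradicting what we just established.

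The main obstacle is verifying the encoding claim $T_i\cong \T(p_i)$: one must carry out a careful side-by-side comparison of the two trimming processes (one relative to $p$, one relative to $p_i$), invoke \autoref{lem:trim} to transfer uncoloured and coloured structural information between them, and confirm that the shifts at the outer layer of steps 1 and 3 align in just the right way to produce isomorphic results. Once this bookkeeping is in hand, the proof concludes by the short direct argument sketched above.
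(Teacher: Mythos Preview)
Your encoding claim --- that the subtree of $\T(v)$ rooted at a surviving child $u$ is isomorphic to $\T(u)$ --- is false, and this is the central gap. The tree $\T(u)$ is trimmed according to the partial colouring at time $t_u$, whereas the subtree of $\T(v)$ above $u$ is trimmed according to the colouring at the later time $t_v$. Between these two times the semi-online algorithm is free to colour previously uncoloured vertices above $u$: when subsequent roots are presented, nothing in the model prevents the algorithm from colouring any vertex it likes, provided no $m$-path with at most one colour is created. Such a colouring alters which chains are ``maximal in $C_i$'' and which parents count as ``uncoloured,'' so the two trimmings diverge well before the boundary distance-shift you identify as the delicate point. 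Concretely, with $m$ large: present a leaf $q$, then $p_1$ with parent $q$ (algorithm leaves both uncoloured, so $\T(p_1)$ is a two-vertex uncoloured path); present a disjoint leaf $p_2$ and let the algorithm now colour $q$; finally present $p$ with parents $p_1,p_2$. Then $\T(p_1),\T(p_2)$ are non-isomorphic as required, yet the subtree of $\T(p)$ at $p_1$ has $q$ coloured and is not isomorphic to $\T(p_1)$. Parts 2 and 3 of \autoref{lem:trim} do not rescue this: part 2 compares a tree with its own trimming at a single time, and part 3 is explicitly conditional on the relevant chain \emph{remaining} uncoloured.

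Note also that if your encoding claim did hold, the argument would collapse to one line: the subtree of $\T(p)$ at $p_j$ would be a proper subtree of $\T(p)$ yet isomorphic to $\T(p_j)\cong\T(p)$, an immediate size contradiction --- the multiset comparison is superfluous. That the paper needs something more intricate is itself a signal that the claim is too strong. The paper's proof proceeds quite differently: it iterates the isomorphism $\varphi\colon\T(p)\to\T(p_j)$ to build a chain $p=r_0\lessdot r_1\lessdot\cdots$ with $r_u=\varphi(r_{u-1})$, and uses a maximality argument on monochromatic chains (together with the irrevocability of colours) to force each $r_u$ to be uncoloured in $\T(p)$, eventually producing an uncoloured chain of $m$ vertices and contradicting $m$-properness.
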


\begin{proof}
	Suppose for contradiction that for some $p_i$, $\T(p) \cong \T(p_i)$.
	Let $\varphi: \T(p) \to \T(p_i)$ be an isomorphism (preserving colourings).
	We prove by induction for all $u < m$ that there is a chain $p=r_0 \lessdot r_1 \lessdot \dots \lessdot r_u$ in $\T(p)$ such that for all $i\le u$ we have $\varphi(r_{i-1}) = r_i$, and $r_i$ is uncoloured.
	
	First suppose $p$ is coloured, say with $C_1$, in $\T(p)$, and let $t_1$ be maximal such that $p$ has a $t_1$-parent in $C_1$. 
	$r_1 = p_i= \varphi(p)$ was coloured with $C_1$ in $\T(r_1)$, and by the isomorphism $r_1$ has a $t_1$-parent in $C_1$. 
	Since we did not recolour any points, this produces a $(t_1+1)$-parent in $C_1$ of $p$ in $\T(p)$, contradicting the maximality of $t_1$.
	
	So $p$ must be uncoloured in $\T(p)$, which implies that $r_1$ was uncoloured in $\T(r_1)$. 
	To complete the base case of the induction hypothesis, we need to show that $r_1$ remains uncoloured in $\T(p)$, i.e., when $p$ appears. Let $t_1$ be as earlier, and suppose again  that $r_1$ is coloured with $C_1$ in $\T(p)$. 
	Then $p$ has an $(t_1+1)$-parent in $C_1$ in $\T(p)$, again a contradiction.
	
	Suppose we have produced a chain $p=r_0 \lessdot r_1 \lessdot \dots \lessdot r_{u-1}$ from the induction hypothesis. 
	If $u-1 = m$, then we are done.
	Otherwise, let $r_u = \varphi(r_{u-1})$. 
	Then $r_u$ is uncoloured in $\T(r_1)$. 
	Since $r_{u-1} \gtrdot r_{u-2}$, $r_u \gtrdot \varphi(r_{u-2}) = r_{u-1}$, so $p=r_0 \lessdot r_1 \lessdot \dots \lessdot r_u$ is a chain, and it remains to show that $r_u$ is uncoloured in $\T(p)$. 
	Suppose $r_u$ is coloured in $\T(p)$ with $C_1$. 
	If $s_1$ is maximal so that $r_{u-1}$ has an $s_1$-parent in $C_1$ in $\T(p)$, then $r_u$ has an $s_1$-parent in $C_1$ in $\T(p_1)$, producing an $(s_1+1)$-parent in $C_1$ for $r_{u-1}$ in $\T(p)$.
	This contradicts the maximality of $s_1$.
	
 This eventually produces a chain of $m$ uncoloured points, which contradicts the correctness of the semi-online algorithm.
\end{proof}
\begin{proof}
From here we can finish the proof of \autoref{thm:arb2} with an infinite descent argument as follows.
Order the finite sequences of naturals, $\N^{<\omega}$, such that $(s_1,s_2,\ldots,s_l)>(s_1',s_2',\ldots,s_{l'}')$ if there is some $i$ such that for all $j<i$ we have $s_j=s_j'$ but $s_i>s_i'$, or $l>l'$ and for all $j\le l'$ we have $s_j=s_j'$.
For a branching \F, we define its \emph{associated sequence} as follows.
For each root $p_i$ of \F, consider the sequence of trees $\T(p_i)$ in the order their roots were presented.
Let $i_1$ be the smallest index such that for every $\T(p_i)$ there is an $i'\le i_1$ such that $\T(p_i)\cong\T(p_{i'})$.
The number of different trees $\T(p_i)$ (same as the number of different trees up to $i_1$) is denoted by $s_1$.
In general, after $i_{j-1}$ has been defined, let $i_j$ be the smallest index such that for every $\T(p_i)$ with $i>i_{j-1}$  there is an $i_{j-1}<i'\le i_j$ such that $\T(p_i)\cong\T(p_{i'})$.
The number of different trees $\T(p_i)$ for $i_{j-1}<i\le i_{j}$ is denoted by $s_j$.
We repeat this for $N$ steps, where $N$ denotes the number of possible different (i.e., non-isomorphic) trees \T, or until there are no more roots in \F.
The numbers $(s_1,\ldots,s_l)$ are the associated sequence of \F.

Note that there are finitely many associated sequences, as each $N\ge s_1\ge s_2\ge \dots \ge s_l$, and also $l\le N$.
Applying \autoref{lem:descent} to the largest associated sequence that can be attained during the run of the semi-online algorithm, we get a contradiction as follows.
Let \F be a branching whose associated sequence, $(s_1,\ldots,s_l)$, is the largest.

Case 1: If $s_1=N$, then we present a new point $p$ whose parents are the roots of $\F$, and by \autoref{lem:descent} we produce a new tree, which is not possible.

Case 2: If $N> s_1> \dots> s_l$, then $l<N$. Introduce a new vertex disjoint from all vertices of $\F$. This will either increase an earlier $s_i$, or give a new $s_{l+1}=1$, but both of these contradict the maximality of $(s_1,\ldots,s_l)$. 

Case 3: There is some $j$ for which $s_j=s_{j+1}$. This is only possible if all the trees $\T(p_i)$ for $i_{j-1}<i\le i_{j}$ have an isomorphic copy $\T(p_{i'})$ for some $i_{j}<i\le i_{j+1}$.
Introduce a new vertex $p$ under all the roots $p_i$ of $\F$ with index $i>i_{j}$ to obtain a new branching $\F'$.
By \autoref{lem:descent}, the tree $\T(p)$ is non-isomorphic to any $\T(p_i)$ with $i_{j-1}<i\le i_{j}$.
Therefore, the associated sequence of $\F'$ will be larger than $(s_1,\ldots,s_l)$, contradicting its maximality.

In summary, it is not possible for a semi-online $k$-colouring algorithm to produce finitely many associated sequences, so it cannot be $m$-proper.
\end{proof}

\subsection{Application to bottomless rectangles}

In this section, we apply \autoref{thm:arb2} to semi-online colouring algorithms for Erd{\H o}s-Szekeres configurations.
We start with towers.

\begin{corollary}\label{cor:tower}
	There is no semi-online colouring algorithm for towers from above, i.e., for any numbers $k$ and $m$, for any semi-online algorithm that $k$-colours bottomless rectangles from above, there is a family of bottomless rectangles such that any two intersecting rectangles form a tower, and the algorithm produces an $m$-fold covered point that is covered by at most one colour.
\end{corollary}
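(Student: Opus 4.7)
The plan is to deduce this corollary from \autoref{thm:arb2} by simulating a colouring algorithm for rectangles as a colouring algorithm for arborescences in a geometric leaf-to-root order. Assume for contradiction that for some $m$ and $k$ there is a semi-online $k$-colouring algorithm $\mathcal{A}$ that colours bottomless rectangles from above and has the property that on every tower family every $m$-fold covered point in its partial colouring is covered by at least two colours. From such an $\mathcal{A}$ I will build an $m$-proper semi-online $k$-colouring algorithm $\mathcal{A}'$ for arborescences in geometric leaf-to-root order, contradicting \autoref{thm:arb2}.

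To do so, I maintain the invariant that the roots $v_1,\ldots,v_s$ of the current partial branching (indexed in order of presentation) correspond to rectangles $R_{v_1},\ldots,R_{v_s}$ whose $x$-intervals $I_{v_1},\ldots,I_{v_s}$ are pairwise disjoint and appear left-to-right in the same order. When the adversary presents a new vertex $u$ with parent set $\{v_i : l<i<r\}$, which is an interval by the geometric assumption, $\mathcal{A}'$ constructs a bottomless rectangle $R_u$ whose $x$-interval strictly contains the union $I_{v_{l+1}}\cup\cdots\cup I_{v_{r-1}}$ but lies strictly inside the gap between $I_{v_l}$ and $I_{v_r}$, and whose top-side is lower than every top-side previously used. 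If $u$ has no parents, $R_u$ is placed in a fresh region of the $x$-axis disjoint from everything constructed so far. Then $R_u$ is presented to $\mathcal{A}$, and whenever $\mathcal{A}$ commits to a colour for $R_u$, $\mathcal{A}'$ assigns the same colour to $u$.

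The verification is routine. The rectangles are presented to $\mathcal{A}$ in strictly decreasing order of top-side, so $\mathcal{A}$ is indeed receiving them from above; the invariant on current roots is preserved after each step, the new roots being $v_1,\ldots,v_l,u,v_r,\ldots,v_s$ with $I_u$ sitting between $I_{v_l}$ and $I_{v_r}$; and an induction shows that $R_v\subsetneq R_u$ exactly when $u<v$ in the partial order of the branching, while rectangles of incomparable vertices are disjoint. In particular any two intersecting rectangles form a tower, and a directed path of $m$ vertices in the branching corresponds to an $m$-tower of rectangles whose innermost rectangle is $m$-fold covered. A monochromatic such path in the partial colouring of $\mathcal{A}'$ therefore translates into an $m$-fold covered point covered by at most one colour in the partial colouring of $\mathcal{A}$, contradicting the assumption on $\mathcal{A}$. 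Hence $\mathcal{A}'$ is $m$-proper, which contradicts \autoref{thm:arb2}.

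The main obstacle, and the reason we need the geometric strengthening \autoref{thm:arb2} rather than just \autoref{thm:arb1}, is exactly the interval condition on parents: if the parents of $u$ were allowed to skip over some intermediate current root $v_j$, then the single bottomless rectangle $R_u$ covering all of them would necessarily also contain $R_{v_j}$, creating a spurious ancestor relation and breaking the tower structure. Once the geometric assumption is available, the entire reduction is essentially mechanical.
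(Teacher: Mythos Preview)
Your proof is correct and follows essentially the same reduction as the paper: realise an arborescence given in geometric leaf-to-root order as a tower family presented from above, so that a semi-online algorithm for the latter yields an $m$-proper semi-online algorithm for the former, contradicting \autoref{thm:arb2}. One small slip: in a tower neither bottomless rectangle contains the other (each sticks out of the other, one sideways and one upwards), so the claim ``$R_v \subsetneq R_u$ exactly when $u<v$'' is false as written and should read $I_v \subsetneq I_u$, i.e., $(R_u,R_v)$ form a tower; the rest of the argument is unaffected.
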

\begin{proof} In order to apply \autoref{thm:arb2}, we need to show that any branching can be realised as a family of towers so that
	\begin{enumerate}
		\item ordering the rectangles from above corresponds to a geometric leaf-to-root order of the branching, and
		\item a semi-online colouring algorithm for towers from above corresponds to an $m$-proper semi-online $k$-colouring algorithm for branchings in this order.
	\end{enumerate}
	
	For any arborescence $\mathcal{F}$ in geometric leaf-to-root order, we show by induction on $|\mathcal{F}|$ that it can be realised as a family of towers with this order. For $|\mathcal{F}| = 1$ this is clear. For the inductive step, we will need to use the fact that the ordering is geometric. For example, suppose we have a non-geometric order and three roots $p,q,r$ that are realised as disjoint rectangles, with $q$ between $p$ and $r$. Then if the next root $s$ is presented with $s<p$ and $s<r$, but $s \nless q$, $s$ cannot be realised as a rectangle.
	
	\begin{figure}[H]
		\centering
		\begin{tikzpicture}[scale=0.7]
		\draw (0,0)--(0,1)--(1,1)--(1,0);
		\draw (2,0)--(2,1)--(3,1)--(3,0);
		\draw (4,0)--(4,1)--(5,1)--(5,0);
		\node[left] at (0,0) {$p$};
		\node[left] at (2,0) {$q$};
		\node[left] at (4,0) {$r$};
		
		\end{tikzpicture}
		\caption{\small{There is no way to present a new rectangle $s$ that intersects $p$ and $r$ but not $q$.}}
	\end{figure}
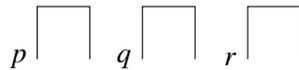

	Now we prove the induction step. Let $|\mathcal{F}| = n$, and $r$ be the last element in the ordering of $V(\mathcal{F})$. Take any realisation of $\mathcal{F} \setminus \{r\}$ as a family of towers. If $r$ is an isolated vertex in $\mathcal{F}$, present $r$ as a disjoint rectangle to the right of the realisation $\mathcal{F} \setminus \{r\}$. Otherwise, since the order is geometric, $r$ will only intersect some geometrically adjacent rectangles of $\mathcal{F}$ (by construction). Hence $r$ can be realised as a minimal rectangle.
	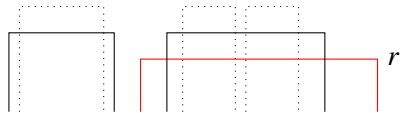
\begin{figure}[H]
		\centering{
			\begin{tikzpicture}[scale=0.7]
			
			\draw[color=red] (3.5,0)--(3.5,1)--(8,1)--(8,0);
			\draw (1,0)--(1,1.5)--(3,1.5)--(3,0);
			\draw (4,0)--(4,1.5)--(7,1.5)--(7,0);
			\draw[dotted] (4.3,0)--(4.3,2)--(5.3,2)--(5.3,0);
			\draw[dotted] (5.5,0)--(5.5,2)--(6.5,2)--(6.5,0);
			\draw[dotted] (1.2,0)--(1.2,2)--(2.8,2)--(2.8,0);
			
			\node[right] at (8,1) {$r$};
			
			\end{tikzpicture}
			\caption{\small{By the geometric ordering, we can realise $r$ as a minimal element.}}
		}
		
	\end{figure}
The proof for nested rectangles from below is analogous.
\end{proof}

\begin{corollary}
	There is no semi-online $k$-colouring algorithm from the left or from below for increasing steps. More precisely, for any integers $k$ and $m$, there is no semi-online algorithm to $k$-colour rectangles from the left (or from below) so that at every step, any point covered by $m$-increasing steps is covered by at least $2$ colours.
	Similarly, there is no semi-online colouring algorithm for decreasing steps from the right or from below.
\end{corollary}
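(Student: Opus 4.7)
The plan is to adapt the argument of \autoref{cor:tower} to the setting of increasing steps from the left (and from below), reducing to \autoref{thm:arb2} via a realisation of arborescences as increasing-step families. The decreasing-steps statements will then follow from the analogous realisation obtained by reflecting the construction horizontally, which exchanges ``from the left for increasing steps'' with ``from the right (equivalently, from below) for decreasing steps''. Specifically, I would show that any branching presented in a geometric leaf-to-root order (as produced by \autoref{thm:arb2}) admits a realisation as a family of increasing steps in which the from-the-left ordering of rectangles matches the leaf-to-root order of the branching, and in which every directed path in the arborescence corresponds to a chain of pairwise intersecting rectangles forming increasing steps. A semi-online algorithm from the left maintaining that every $m$-increasing-step-covered point has at least two colours would then give an $m$-proper semi-online $k$-colouring algorithm for arborescences, contradicting \autoref{thm:arb2}. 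The ``from below'' case coincides with ``from the left'' for our realisation, since the top coordinates will be monotone in presentation order.

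The realisation proceeds by induction on $|\mathcal{F}|$, analogous to \autoref{cor:tower}. In the inductive step, let $u$ be the last-presented vertex. By inspection of the proof of \autoref{thm:arb2}, the text-parents of each newly introduced vertex always form either the empty set, the full set of current roots, or a proper suffix of the current roots in presentation order, so it suffices to realise this case. Maintaining the invariant that the current roots $v_1, \ldots, v_s$ have their left, right, and top coordinates all strictly increasing in presentation order, I would place $u$ with $l(u) \in [r(v_{l-1}), r(v_l))$ (using the convention $r(v_0) := -\infty$ when $l = 1$), where $v_l, \ldots, v_s$ is the suffix of text-parents of $u$, and with $r(u)$ and $y(u)$ strictly larger than those of every previously placed rectangle. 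The invariant $r(v_{l-1}) < r(v_l)$ makes this interval nonempty, and a short check verifies that $u$ then forms increasing steps with each of $v_l, \ldots, v_s$ while remaining disjoint from $v_1, \ldots, v_{l-1}$, thereby preserving the invariant. An isolated vertex is placed as a disjoint rectangle far to the right of the existing realisation.

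The main obstacle is ensuring that not only direct-parent pairs but \emph{all} ancestor--descendant pairs along an arborescence chain $w_1 \lessdot w_2 \lessdot \cdots \lessdot w_k$ correspond to pairwise intersecting rectangles in the realisation; only under this property does a monochromatic directed $m$-path in the arborescence translate to a single point covered by $m$ monochromatic increasing-step rectangles. The difficulty is that a deep descendant $w_k$'s right-coordinate is fixed early, yet the much-later-placed ancestor $w_1$ must still satisfy $l(w_1) < r(w_k)$. I would resolve this by choosing right-coordinates to globally dominate left-coordinates---for instance, by compressing every left-coordinate into a bounded interval $[0, L]$ and setting every right-coordinate strictly larger than $L$. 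Since the branching produced by \autoref{thm:arb2} is finite of size bounded in terms of $k$ and $m$, such an $L$ can be fixed before the realisation begins, ensuring that every ancestor's right-coordinate strictly exceeds every descendant's left-coordinate. Combined with the monotonicity of $l$, $r$, and $y$, this forces every arborescence path to become an increasing-step chain, completing the reduction.
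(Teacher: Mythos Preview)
Your placement rule has a genuine gap. You put the new vertex $u$ with $l(u)\in[r(v_{l-1}),r(v_l))$ and $r(u),y(u)$ larger than everything, and then claim $u$ forms increasing steps with each parent $v_l,\ldots,v_s$. But for $(v_s,u)$ to be increasing steps with $u$ on top you need $l(v_s)<l(u)$, and all you know is $l(u)\ge r(v_{l-1})$; this forces $l(v_s)<r(v_{l-1})$, i.e.\ the roots $v_{l-1}$ and $v_s$ must overlap in $x$. Since you place isolated vertices disjointly, the roots start out pairwise disjoint and the check already fails: $u$ will be \emph{nested} around some of its parents rather than form increasing steps with them. Worse, even if you patch this once, the new root set $\{v_1,\ldots,v_{l-1},u\}$ contains the disjoint pair $(v_{l-1},u)$, so any later vertex whose suffix of parents spans both of them faces the same obstruction. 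Your compression fix does not help: requiring every right-coordinate to exceed $L$ while $l(u)\ge r(v_{l-1})>L$ is a contradiction, and if you drop the disjointness rule and keep only the compression, \emph{every} pair of rectangles forms increasing steps, the whole family becomes a single increasing $n$-step configuration (trivially $2$-colourable online), and the arborescence structure that \autoref{thm:arb2} exploits is gone. Relatedly, your construction does not make the left-coordinates globally monotone in presentation order, so ``from the left'' does not coincide with the leaf-to-root order.

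The paper's resolution is to let the new rectangle intersect \emph{all} current roots, not just its arborescence parents. For the ``from the left'' case the roots are maintained as \emph{decreasing} steps; the new rectangle $r$ is given the largest $l$ and $r$ so far but height strictly between that of its last parent and its first non-parent, so it forms increasing steps with its parents and decreasing steps with the remaining roots. The witness point for a chain $(r_1,\ldots,r_{m-1},r)$ is the top-right corner of the leaf-most rectangle $r_1$. For ``from below'' the roots are instead maintained as a tower; the two directions use genuinely different realisations and do not coincide.
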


Note that this statement is slightly weaker than \autoref{thm:rect} or \autoref{cor:tower} because we do not exclude the other kind of configurations from the family.

\begin{proof}
	
	We first prove the statement for increasing steps from the left. Again, we will prove the corollary by induction on $|\mathcal{F}|$. However, we also weaken our requirements for the colouring of the steps. 
	That is, we need not assume that every directed path of length $m$ in the branching corresponds to a point covered by exactly $m$ increasing steps. 
	It is easy to see that a semi-online colouring algorithm of $\mathcal{F}$ is $m$-proper if and only if when any point $p \in \mathcal{F}$ is presented, any directed path of length $m$ \textit{from} $p$ contains at least $2$ colours.  
	So it suffices to prove the following by induction.
	
	Any branching $\mathcal{F}$ with a geometric leaf-to-root order can be realised as a family of bottomless rectangles so that
	\begin{enumerate}
		\item when $p \in \mathcal{F}$ is presented, we realise $p$ as a rectangle so that any directed path of length $m$ from $p$ corresponds to a point covered by exactly $m$ increasing steps, and
		\item any two rectangles intersect either as increasing or as decreasing steps.
	\end{enumerate}
	
	The second assumption is a technical condition to ensure that $q$ covers the top-right corner of $r$ if and only if $(r,q)$ form increasing steps, so we can choose the top-right corner of an appropriate rectangle as the point satisfying the first induction hypothesis.
	
	The case $|\mathcal{F}| = 1$ is trivial. Let $|\mathcal{F}| = n$, and $r$  be the last element in the ordering of $\mathcal{F}$. Take any realisation of $\mathcal{F} \setminus \{r\}$ satisfying the induction hypotheses. If $r$ is an isolated vertex, let $q$ be the last element presented (thus a root), and realise $r$ as a rectangle so that $(q,r)$ form decreasing $2$-steps (see \autoref{fig:stepsroot}). There are no directed paths of length $m$ from $r$ so both induction hypotheses are satisfied. 
	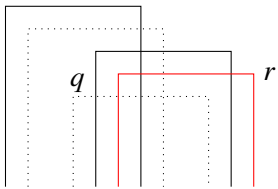
\begin{figure}[H]
		\centering{
			\begin{tikzpicture}[scale=0.6]
			\draw (0,0)--(0,4)--(3,4)--(3,0);
			\draw[dotted] (0.5,0)--(0.5,3.5)--(3.5,3.5)--(3.5,0);
			
			\draw[dotted] (1.5,0)--(1.5,2)--(4.5,2)--(4.5,0);
			\draw (2,0)--(2,3)--(5,3)--(5,0);
			\draw[color=red] (2.5,0)--(2.5,2.5)--(5.5,2.5)--(5.5,0);
			
			\node[right] at (5.5,2.5) {$r$};
			\node[left] at (2,2.3) {$q$};
			
			\end{tikzpicture}
		}
		\caption{\small{The rectangles in decreasing steps correspond to roots of the branching.}}\label{fig:stepsroot}
	\end{figure}

	Otherwise, since the ordering is geometric, $r$ will only intersect the rightmost rectangles (by construction), thus can be realised as a rectangle that forms increasing steps with these rightmost roots, and decreasing steps with the other roots (see \autoref{fig:stepsbranching}).
	
	To see that the first hypothesis is satisfied, consider the rectangles corresponding to any directed path of length $m$ from $r$, say $(r_1, \dots, r_{m-1}, r)$. Then the top-right corner of $r_1$ will not be covered by any rectangle other than the ones in this chain - this follows from the induction hypothesis and the fact that $\mathcal{F}$ is a branching, so $r_2$ is the unique child of $r_1$.
	\begin{figure}[H]
		\centering
		\begin{tikzpicture}[scale=0.6]
		\draw (0,0)--(0,4)--(3,4)--(3,0);
		\draw[dotted] (0.5,0)--(0.5,3.5)--(3.5,3.5)--(3.5,0);
		\draw (1,0)--(1,1.5)--(4,1.5)--(4,0);
		\draw[dotted] (1.5,0)--(1.5,2)--(4.5,2)--(4.5,0);
		\draw (2,0)--(2,2.5)--(5,2.5)--(5,0);
		\draw[color=red] (2.5,0)--(2.5,3)--(5.5,3)--(5.5,0);

		\node[right] at (5.5,3) {$r$};
		\node[left] at (2,2.5) {$r_{m-1}$};
		\node[left] at (1,1.5) {$r_1$};

		\end{tikzpicture}
		\caption{\small{The new rectangle $r$ is presented in increasing steps with $r_{m-1}$, and decreasing steps with the other minimal elements.}}\label{fig:stepsbranching}
	\end{figure}
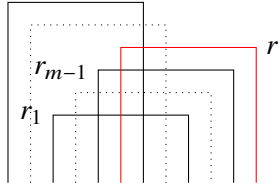
	
	The proof for increasing steps from below follows analogously, except we change the second induction hypothesis to assume that any two rectangles intersect either as increasing steps or as a tower. In this construction, the roots of the branching at any time will correspond to a tower, and the geometric ordering ensures that a new root can be placed in increasing steps with the top-most rectangles of the tower.
	
	The proof for decreasing steps is exactly the same, only interchanging left and right.
\end{proof}

\section{Further results for bottomless rectangles}\label{sec:further}

\subsection{No shallow hitting sets}

To prove an upper bound for intersecting families in \autoref{thm:intersecting}, we used the fact that if a family admits a $c$-shallow hitting set for arbitrary depths $d$, then $m_k^* \leq ck$.
Unfortunately, general families of bottomless rectangles do not admit such hitting sets.
\begin{theorem}\label{thm:noshallowhittingset}
For every integer $c \geq 0$, there exists a real number $r < 1$ and an integer $D \geq 1$ such that for every integer $d \geq D$, there is a family $\F = \F(h,d)$ such that the following holds.
For any hitting set $H$ of the $d$-cells of $\F$, there is a vertical line $\ell$ such that $|\ell \cap \F| \leq \lceil rd \rceil$ and $|\ell \cap H| \geq h$.
\end{theorem}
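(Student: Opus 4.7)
My plan is induction on the parameter $h$ (I treat the ``$c$'' in the hypothesis as the same parameter as $h$). For the base case $h=1$, I take $d$ bottomless rectangles in increasing-step position, overlapping horizontally with strictly increasing heights, so that their unique $d$-cell sits in a narrow common column on the right while the vertical line at $x=l_k^+$ (just past the $k$-th left edge) cuts only $d-k+1$ rectangles. Any hitting set $H$ is non-empty, so picking a rectangle $R \in H$ and shifting $\ell$ to a low-density position inside $R$ gives $|\ell \cap \F| \le \lceil 0.9\,d\rceil$ and $|\ell \cap H| \ge 1$; hence $r = 0.9$ suffices.

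For the inductive step, given $\F(h-1, d')$ with constant $r' < 1$, I build $\F(h, d)$ by placing $N$ horizontal translates of $\F(h-1, d')$ side by side with disjoint $x$-ranges, and adding a \emph{cap} of $d - d'$ bottomless rectangles arranged so that (i) each cap rectangle spans the full $x$-range of all the copies and has top strictly above everything in the copies, so the depth of $\F(h, d)$ at any interior point of copy $i$ equals $(d - d')$ plus the depth of copy $i$ there, making the $d$-cells of $\F(h, d)$ correspond exactly to the $d'$-cells of the copies; and (ii) no single cap rectangle by itself can serve as a hitting set --- I would ensure this either by making the cap rectangles span only a proper fraction of the copies each, or by replacing the cap with a small recursive copy of $\F(h, d-d')$. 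When at least one cap rectangle $R^{\ast} \in H$, applying the inductive hypothesis in any single copy yields a line $\ell$ with $|\ell \cap H \cap \F_i| \ge h-1$; since $R^{\ast}$ lies on $\ell$ as well, $|\ell \cap H| \ge h$ and $|\ell \cap \F| \le \lceil r' d' \rceil + (d-d')$. Choosing $d' \approx d/2$ yields $r = (1 + r')/2 < 1$.

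The hard part is the complementary case, where $H$ contains no cap rectangle at all. Then $H \cap \F_i$ is itself a hitting set of each copy's $d'$-cells, and the induction only gives $h-1$ rectangles of $H$ on the witness line, one short. I expect to close this gap by strengthening property (ii): insert inside each copy a small number of ``exposed'' $d$-cells that are not simultaneously under all cap rectangles, so that omitting the cap forces $H$ to pick up extra inner rectangles concentrated on a low-density column, which an averaging/pigeonhole argument over the $N$ copies can match to the inductive witness. The two technical points that will need care are verifying that this refinement preserves the clean correspondence between $d$-cells of $\F(h, d)$ and $d'$-cells of the copies (so no stray high-depth cells are created), and verifying that $r_h$ remains bounded strictly below $1$ uniformly in $d$ as the induction proceeds.
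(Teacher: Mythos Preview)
Your overall plan (induction on $h$, a skeleton of outer rectangles with embedded copies of $\F(h-1,\cdot)$) matches the paper, and your base case is fine. But the inductive step has a genuine gap exactly where you flag it. With a uniform cap of $d-d'$ rectangles spanning all $N$ copies, every $d$-cell of $\F(h,d)$ lies inside some copy, so a hitting set $H$ can legitimately be chosen entirely from the copies and avoid the cap altogether. Your proposed patches do not close this: an ``exposed $d$-cell not under all cap rectangles'' would have depth strictly less than $d$ and hence imposes no constraint on $H$; the pigeonhole over $N$ copies cannot manufacture the missing $h$-th rectangle on a single vertical line, since the copies have disjoint $x$-ranges and each one only yields $h-1$; and ``replacing the cap with a recursive copy of $\F(h,d-d')$'' is circular, as you are in the middle of defining $\F(h,\cdot)$.

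The paper resolves this with two ideas you are missing. First, the skeleton is not a flat cap but $d$ rectangles $R_1,\dots,R_d$ in increasing-step position; their common intersection contains a point $p$ that lies outside every embedded copy, so $p$ is a $d$-cell covered \emph{only} by $R_1,\dots,R_d$, and hence every hitting set must contain some $R_i$. This eliminates your ``complementary case'' outright. Second, rather than identical copies, the paper inserts one copy of $\F(h-1,d)$ over the leftmost $i_0=\lfloor sd\rfloor$ skeleton rectangles, and for each $i>i_0$ a copy of $\F(h-1,i)$ placed in the $x$-range of exactly $R_i,\dots,R_d$. Whichever $R_i$ lies in $H$, there is then a copy whose inductive witness line passes through $R_i$ while meeting only about $\min(i_0,\,d-i+1)$ skeleton rectangles; adding the inductive $r_{h-1}$-fraction from that copy keeps the total below $r_h d$ for an explicit $r_h<1$. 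Your vague option ``make each cap rectangle span only a proper fraction of the copies'' is a gesture in this direction, but the actual mechanism requires both the forced skeleton hit and the position-dependent copies with varying depth parameter.
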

In particular, $\F (h,d)$ witnesses that there are no $(h-1)$-shallow hitting sets for depth $d$.
\begin{proof}
We construct $\F(h,d)$ by induction on $h$.
For $h=0$, letting $r=0$, $D = 1$, and $\F(h,d)$ be the empty family, the conditions are satisfied.
Let $r_{h-1}$ and $D_{h-1}$ be the values given by the induction hypothesis for $h-1$.
Choose $s$ to be a real number such that $0 < s < 1-r_{h-1}$.
Set
\[
D > \max \Big(\frac{D_{h-1}}{s}, \frac{1}{s(1-r_{h-1})}\Big); \quad
r = \max\Big(r_{h-1}+s, 1-s(1-r_{h-1}) - \frac{1}{D}\Big).
\]
For $d \geq D$, we define $\F = \F(h,d)$ as follows.
Take $d$ rectangles $(R_1, \dots, R_d)$ in increasing $d$-steps.
We will insert families of the form $\F(h-1, i)$ for some $D_{h-1} \leq i \leq d$.
First, set $i_0 = \lfloor sd \rfloor$, so that $i_0 \geq \lfloor sD \rfloor D_{h-1} \geq 1$.
Insert a copy of $\F(h-1,d)$ so that it intersects only the rectangles $R_1, \dots , R_{i_0}$, and the top sides of all the rectangles are above that of $R_{i_0}$.
Next, for every $i_0 < i \leq d$, insert a copy of $\F(h-1,i)$ so that it intersects exactly the rectangles $R_i, \dots, R_d$, and all the top sides lie between those of $R_i$ and $R_{i+1}$ (or just above that of $R_d$ when $i=d$).
\begin{figure}[H]
    \centering{
    \begin{tikzpicture}[y=0.7cm, x=1cm]
    \draw[color=gray] (0,0)--(0,1)--(6,1)--(6,0);
    \node[below] at (0,0) {\footnotesize $R_1$ };
    \draw[dotted] (0.5,0)--(0.5,1.5)--(6.5,1.5)--(6.5,0);
    \draw[color=gray] (1,0)--(1,2)--(7,2)--(7,0);
    \node[below] at (1,0) {\footnotesize $R_{i_0}$ };
    \draw[color=red] (1.5,0)--(1.5,2.5)--(2,2.5)--(2,0);
    \node[above] at (1.75,2.5) {\footnotesize $\F(h-1, d)$};
    
    \draw[color=gray] (3,0)--(3,3)--(9,3)--(9,0);
    \node[below] at (3,0) {\footnotesize $R_{i_0 + 1}$};
    \draw[color=red] (7.5,0)--(7.5,3.5)--(8,3.5)--(8,0);
    \node[above] at (7.25,3.5) {\footnotesize $\F(h-1, i_0+1)$};

    \draw[color=gray] (4,0)--(4,4)--(10.25,4)--(10.25,0);
    \node[below] at (4,0) {\footnotesize $R_{i_0 + 2}$};
    \draw[color=red] (9.25,0)--(9.25,4.25)--(9.75,4.25)--(9.75,0);
    \node[above] at (9.5,4.25) {\footnotesize $\F(h-1, i_0+2)$};
    
    \draw[dotted] (4.5,0)--(4.5,4.5)--(11,4.5)--(11,0);
    
    \draw[color=gray] (5,0)--(5,5)--(11.5,5)--(11.5,0);
    \node[below] at (5,0) {\footnotesize $R_d$ };
    \draw[color=red] (11.25,0)--(11.25,5.25)--(11.75,5.25)--(11.75,0);
    \node[above] at (11.5,5.25) {\footnotesize $\F(h-1, d)$};
    
    \node[label=below:{\footnotesize $p$ }, circle, fill=black,inner sep=0pt,minimum size=3pt] (p) at (5.5,0.5) {};

    \end{tikzpicture}
    \caption{\small{The construction of $\F = \F(h,d)$.}}
    }
    \label{fig:nohitting}
\end{figure}
Now, let $H$ be a hitting set for the $d$-cells of $\F$.
Since the point $p$ is in a $d$-cell, $H$ must contain at least one of $R_1, \dots, R_d$.
First, suppose that $R_i \in H$ for some $i \leq i_0$.
The intersection of $H$ with the leftmost copy of $\F(h-1,d)$ is a hitting set for the $d$-cells in $\F(h-1,d)$.
By induction, there is a line $\ell$ such that $|\ell \cap \F(h-1,d)| \leq r_{h-1}d$ and $| \ell \cap H \cap \F(h-1,d)| \geq h-1$.
Then,
\[
|\ell \cap \F| \leq r_{h-1}d + i_0 \leq (r_{h-1}+s)d < rd
\]
and
\[
|\ell \cap H | \geq (h-1) + 1 \geq h.
\]
On the other hand, suppose $H$ does not contain any rectangle $R_i$ for $i \leq i_0$.
Let $i$ be maximal so that $R_i \in H$.
Then, $H \cap \F(h-1, i)$ is a hitting set for the $i$-cells in $\F(h-1,i)$.
Again, we have a vertical line $\ell$ such that $|\ell \cap \F(h-1,i)| \leq r_{h-1}i$ and $|\ell \cap H \cap \F(h-1,i)| \geq h-1$.
Similarly,
\[
|\ell \cap H| \geq h
\]
and
\begin{align*}
|\ell \cap \F| &\leq r_{h-1}i + (d-i) + 1\\
&= d+1 - i(1-r_{h-1})\\
&\leq d+1 - sd(1-r_{h-1})\\
&= d(1-s(1-r_{h-1}) + 1/d)\\
&\leq rd.
\end{align*}
\end{proof}

\subsection{An improved lower bound}

Finally, we present an improved lower bound for general bottomless rectangle families, and a weaker lower bound that can be applied to the steps problem.

\begin{theorem}\label{thm:lb}
	$m_k^*(\Frect) \geq 2k-1$ for general families of bottomless rectangles.
\end{theorem}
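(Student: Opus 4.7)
The plan is to build, for each $k \geq 2$, an explicit finite family $\F_k \subseteq \Frect$ together with a collection of $(2k-2)$-fold covered points such that no $k$-colouring of $\F_k$ makes \emph{every} such point covered by all $k$ colours. Since $m_2^*(\Frect) \geq 3$ is already due to Keszegh \cite{Khalf}, the base case $k=2$ can be taken as given; the work is in the inductive step $k-1 \Rightarrow k$.

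For the inductive step, starting from a family $\F_{k-1}$ that witnesses $m_{k-1}^*(\Frect) \geq 2k-3$, I would place many horizontally disjoint scaled copies of $\F_{k-1}$ inside narrow vertical strips and add a small number of wide ``roof'' bottomless rectangles that stretch across all of these strips at two carefully chosen heights. The heights would be arranged so that each roof covers every one of the witness $(2k-4)$-cells inside every copy of $\F_{k-1}$. After this placement, each such witness cell automatically becomes a $(2k-2)$-cell of $\F_k$, contributing two additional rectangles (the two roofs) to its covering multiplicity.

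Given any $k$-colouring of $\F_k$, the two roof rectangles occupy some pair of colours, and a pigeonhole/counting argument on the (many) copies of $\F_{k-1}$ would isolate a copy whose internal $k$-colouring uses at most $k-1$ distinct colours from among the colours not appearing on the roofs. Applying the inductive hypothesis to this effective $(k-1)$-colouring of that copy produces an internal $(2k-4)$-cell missing a colour from the restricted palette. Combining with the two roof rectangles (whose colours lie outside the restricted palette) yields a $(2k-2)$-cell of $\F_k$ that misses a colour outright, as desired.

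I expect the main obstacle to be the pigeonhole/counting step: one must choose the number of copies of $\F_{k-1}$ large enough, as a function of $k$ and $|\F_{k-1}|$, to guarantee some copy falls into an ``effectively $(k-1)$-coloured'' state whose restricted palette avoids the roof colours. A secondary geometric obstacle is placing the roofs so that they simultaneously extend over every witness cell in every copy while not themselves accidentally supplying the missing colour to those cells. Both issues should be manageable by scaling the copies to be very narrow horizontally and spacing them sufficiently so that the two roofs can be drawn as genuine bottomless rectangles spanning all strips at the prescribed heights.
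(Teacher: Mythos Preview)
Your plan has a genuine gap at precisely the pigeonhole step you flag, and already the step $k=2\Rightarrow k=3$ fails. Your $\F_3$ consists of many horizontally disjoint copies of a witness $\F_2$ for $m_2^*\ge 3$, plus two wide roofs; being bottomless and required to cover every $2$-cell of every copy, the roofs in fact cover every point of every copy at depth $\ge 2$. The adversary now colours the two roofs with colours $1$ and $2$ and \emph{every} rectangle in every copy with colour $3$. Then each point of $\F_3$ at depth $\ge 4$ lies in some copy at depth $\ge 2$ together with both roofs, so its colour multiset is $\{1,2,3,\ldots,3\}$ and it is polychromatic; your $\F_3$ does not witness $m_3^*\ge 5$. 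No number of copies helps, since the adversary sees the whole family before colouring. The abstract obstruction for general $k$ is that once the roofs receive distinct colours $c_1\ne c_2$, the only way to feed the inductive hypothesis a $(k-1)$-colouring of a copy is to merge $c_1$ and $c_2$ there; induction then returns a $(2k-4)$-cell missing \emph{some} colour of the merged palette, but you cannot prevent that colour from being the merged class itself, in which case the roofs supply both $c_1$ and $c_2$ and the enlarged $(2k-2)$-cell is polychromatic after all.

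The paper's argument is close in spirit but supplies the missing idea. It does not fix two roofs. Instead it shows by contradiction that if $m_k^*\le m_{k-1}^*+1$, then \emph{every} family admits a polychromatic $k$-colouring that is moreover proper on all $2$-cells: place a copy of the $(k-1)$-witness $\mathcal{G}$ inside every $2$-cell of an arbitrary host $\F$, take a polychromatic $k$-colouring of the union, and observe that at any monochromatic $2$-cell of $\F$ one could recolour that colour inside the local $\mathcal{G}$ to obtain a too-good $(k-1)$-colouring of $\mathcal{G}$. It then exhibits a single host --- an arbitrarily tall tower together with one transversal rectangle $R$ --- for which no polychromatic $k$-colouring can be proper on all $2$-cells. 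Properness on $2$-cells is what replaces your pigeonhole: it forces the two covering rectangles over some copy of $\mathcal{G}$ to receive the \emph{same} colour, which is exactly the configuration the recolouring trick needs and which your two fixed roofs cannot guarantee.
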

\begin{proof}
	Our lower bound construction proceeds in two steps. 
	
	\begin{enumerate}
		\item If $m_k^* < m_{k-1}^*+2$, then every family has a polychromatic $k$-colouring that is proper.
		\item There is a family so that no polychromatic $k$-colouring is proper.
	\end{enumerate}
	This contradiction shows that $m_k^* \geq m_{k-1}^* + 2$, so by induction $m_k^* \geq 2k-1$.
	
1. Suppose for some family $\mathcal{F}$, no polychromatic $k$-colouring of $\mathcal{F}$ is  proper. Let $\mathcal{G}$ be a witness to the sharpness of $m_{k-1}^*$, i.e.\ any $(k-1)$-colouring of $\mathcal{G}$ produces a point covered by $m_{k-1}^*-1$ rectangles but not all $k$ colours. In a small interval around every $2$-covered point in $\mathcal{F}$, we place a thin copy of $\mathcal{G}$ (see \autoref{fig:prop}).
	
	Any polychromatic colouring of this new family $\mathcal{F}'$ must induce a polychromatic colouring of $\mathcal{F}$, so some copy of $\mathcal{G}$ is covered by $2$ rectangles of the same colour, say red.
	By hypothesis, any point in this copy of $\mathcal{G}$ covered by at least $m_{k}^*$ rectangles is covered by all $k$ colours. 
	Since every such point is covered by exactly two red rectangles from $\mathcal{F}$, recolouring every red rectangle in $\mathcal{G}$ blue cannot ruin this property. 
	However, this induces a $(k-1)$-colouring of $\mathcal{G}$ so that any point in $m_{k-1}^*-1$ rectangles is covered by all $k-1$ colours, a contradiction.
	
	So every family must have a polychromatic colouring that is proper.
	
	\begin{figure}[H]
		\centering{	
			\begin{tikzpicture}[y=0.7cm]
			\draw (0,0)--(0,2)--(3,2)--(3,0);

			\draw (1,0)--(1,1)--(4,1)--(4,0);

			\draw (1.8,0)--(1.8,3)--(5,3)--(5,0);
			
			\node[circle, fill=black,inner sep=0pt,minimum size=3pt] at (0.5,0.6) {};
			\node[circle, fill=black,inner sep=0pt,minimum size=3pt] at (2.5,1.5) {};
			\node[circle, fill=black,inner sep=0pt,minimum size=3pt] at (3.5,0.4) {};
			
			\draw (0.3,0)--(0.3,1.2)--(0.7,1.2)--(0.7,0);
			\draw (2.3,0)--(2.3,1.7)--(2.7,1.7)--(2.7,0);
			\draw (3.3,0)--(3.3,0.6)--(3.7,0.6)--(3.7,0);

			\node[above] at (0.3,1.2) {$\mathcal{G}$};
			\node[left] at (2.3,1.7) {$\mathcal{G}$};
			\node[above] at (3.3,0.6) {$\mathcal{G}$};
			
			\end{tikzpicture}
			\caption{\small{Place disjoint thin copies of $\mathcal{G}$ around every $2$-covered point in $\mathcal{F}$.}}\label{fig:prop}
		}
		
	\end{figure}
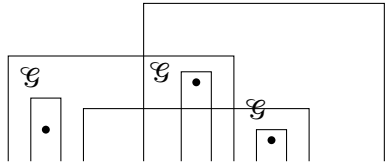
	
2. Consider the family in \autoref{fig:noprop}.
	
	\begin{figure}[H]
		\centering {
			\begin{tikzpicture}[x=0.5cm, y=0.4cm]
			\draw (0,0)--(0,1)--(8,1)--(8,0);
			\draw (1,0)--(1,2)--(7,2)--(7,0);
			\draw[dotted] (2,0)--(2,3)--(6,3)--(6,0);
			\draw (3,0)--(3,4)--(5,4)--(5,0);
			\draw (4,0)--(4,6)--(10,6)--(10,0);
			
			\node[circle, fill=black,inner sep=0pt,minimum size=3pt] at (7.5,0.5) {};
			\node[circle, fill=black,inner sep=0pt,minimum size=3pt] at (6.5,1.5) {};
			\node[circle, fill=black,inner sep=0pt,minimum size=3pt] at (4.5,3.5) {};
			\node[circle, fill=black,inner sep=0pt,minimum size=3pt,label=below:{\small$p$}] at (3.5,0.5) {};
			
			\node[right] at (10,6) {$R$};
			
			\end{tikzpicture}
			\caption{\small{No polychromatic colouring of this family will be proper.}}\label{fig:noprop}
		}
		
	\end{figure}
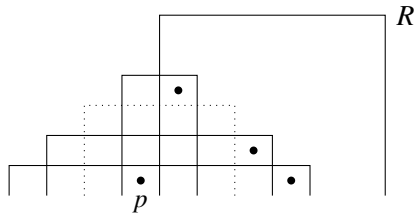
	
We have an $m$-tower (where $m$ may be arbitrarily large), so that each rectangle from the tower meets $R$ in a $2$-covered point. Suppose without loss of generality that $R$ is coloured red in some polychromatic $k$-colouring. For this colouring to be proper, no rectangle of the tower can be red - however the point $p$ will then be covered by $m$ rectangles, none of which are red, so the colouring cannot be polychromatic. This completes our proof. 
\end{proof}

This lower bound cannot be applied to \textit{unit bottomless}, as this construction relies heavily on towers.
For these, we prove the following weaker lower bound.

\begin{proposition}
	$m_k^*(\Funit) \geq 2\lfloor \frac{2k-1}{3}\rfloor + 1$ for \textit{unit bottomless}.
\end{proposition}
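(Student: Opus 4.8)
The plan is to adapt the two-part argument of \autoref{thm:lb}. Recall that $m_k^*(\Funit)\le 2k-1$ is already known (\autoref{prop:stepspt}), so the point is a matching-order lower bound; one cannot simply copy \autoref{thm:lb} because its construction is built from towers, which a unit-width family never contains, so every gadget has to be rebuilt out of steps, and the rebuilt gadgets are lossy. I would run an induction on $k$ proving $m_k^*(\Funit)\ge 2\lfloor (2k-1)/3\rfloor+1$. The base cases $k=1,3$ are trivial ($m_j^*\ge j$ always, since a point in fewer than $j$ rectangles cannot see $j$ colours), and $k=2$ follows from a small explicit gadget: three pairwise-overlapping unit-width rectangles $R_1,R_2,R_3$ (with $l(R_1)<l(R_2)<l(R_3)$, $R_2$ low, $R_1,R_3$ high) arranged so that each of the three pairs has a point covered by exactly that pair; no $2$-colouring keeps all three such points bichromatic, so $m_2^*(\Funit)\ge 3$.

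For the inductive step I would prove $m_k^*(\Funit)\ge m_{k-3}^*(\Funit)+4$ for $k\ge 4$, which suffices since $2\lfloor(2k-1)/3\rfloor+1=\bigl(2\lfloor(2(k-3)-1)/3\rfloor+1\bigr)+4$. Mimicking \autoref{thm:lb}, suppose for contradiction $m_k^*(\Funit)=M<m_{k-3}^*(\Funit)+4$, and call a $k$-colouring \emph{$4$-proper} if every cell of depth at most $4$ is rainbow (its rectangles get pairwise-distinct colours). Then: (i) \emph{$4$-proper colourings always exist} --- given an arbitrary unit-width family $\mathcal{F}$, insert around its shallow cells pairwise-disjoint small copies of a unit-width family $\mathcal{G}$ witnessing the sharpness of $m_{k-3}^*(\Funit)$, each copy interacting with exactly the (at most four) rectangles defining its cell and placed so the enlarged family $\mathcal{F}'$ is still tower- and nested-set-free; since a point in $d$ rectangles of such a copy lies in $d+4$ rectangles of $\mathcal{F}'$, the recolouring argument of \autoref{thm:lb} (recolour the $\le 3$ repeated colours of a non-rainbow shallow cell inside the local copy of $\mathcal{G}$, producing a polychromatic $(k-3)$-colouring of $\mathcal{G}$ at threshold $M-4<m_{k-3}^*(\Funit)$, a contradiction) shows the polychromatic $k$-colouring of $\mathcal{F}'$ furnished by $m_k^*(\Funit)=M$ restricts to a $4$-proper polychromatic $k$-colouring of $\mathcal{F}$. (ii) \emph{No $4$-proper polychromatic $k$-colouring exists} for a suitable unit-width family: take increasing $m$-steps $A_1,\dots,A_m$ through a common point $p$ together with a recursively built "forcing" step-structure above the $A_i$, arranged so that $4$-properness confines the colour of every $A_i$ to a common set of $k-1$ classes; then $p$ misses a colour, contradicting polychromaticity at threshold $M=m$. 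Ingredients (i) and (ii), applied to the unit-width family $\mathcal{F}'$, contradict each other.

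The main obstacle is ingredient (ii). In \autoref{thm:lb} a single rectangle $R$ meets every rectangle of the tower in a private $2$-covered point, so $\mathrm{colour}(R)$ is forbidden from all of them at once; but the rectangles through a point $p$ of a unit-width family pairwise overlap heavily, no single rectangle can meet all of them in private $2$-covered points, and a $4$-proper colouring has no large forced-monochromatic set to exploit. My resolution would be to make the forcing structure itself a small hard-for-few-colours unit-width family whose rigidity under $4$-properness limits the colours it can present to the $A_i$; calibrating this so that spending three extra colours buys only a threshold gain of four --- rather than the one-for-two exchange of the tower gadget --- is the delicate part, and is exactly why the bound is $\approx\tfrac43k$ rather than $2k-1$. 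A secondary technical point in (i) is that a unit-width copy of $\mathcal{G}$ cannot be made thin, so one must check it can be placed near a cell without creating a tower or a nested set with the ambient rectangles; this is why the "$2$-proper" notion of \autoref{thm:lb} must be relaxed to "$4$-proper" here.
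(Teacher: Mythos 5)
Your proposal has a genuine gap at exactly the point you yourself flag as ``the delicate part'': ingredient (ii), a unit-width family admitting no $4$-proper polychromatic $k$-colouring, is never actually constructed. You gesture at ``a recursively built forcing step-structure'' whose rigidity under $4$-properness is supposed to confine the colours available to the $A_i$ to $k-1$ classes, but you supply no concrete family, no recursion, and no argument that $4$-properness pins colours down in the required way; you explicitly concede that the calibration ``is the delicate part.'' As written this is a plan, not a proof, and it is not obvious the plan is realisable: in a unit-width family the rectangles through a common point overlap heavily, so there is no analogue of the single wide rectangle $R$ in the proof of \autoref{thm:lb} whose one colour can be forbidden from an entire tower at once --- which is precisely the mechanism your scheme would need to replace and does not.

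The paper's argument is far more elementary, requires no induction, and is in fact the direct scaling-up of the $k=2$ gadget you already found. Take $2k-1$ unit-width rectangles and partition them into three near-equal blocks $\mathcal{F}_1,\mathcal{F}_2,\mathcal{F}_3$, arranged as three pairwise-crossing step-stacks (your three rectangles, thickened) so that for each $i$ there is a point $p_i$ lying in every rectangle of $\mathcal{F}_j\cup\mathcal{F}_\ell$ ($j,\ell\ne i$) and in no rectangle of $\mathcal{F}_i$. In any $k$-colouring of these $2k-1$ rectangles, by pigeonhole some colour is used at most once and hence is confined to a single $\mathcal{F}_i$; then $p_i$ is covered by $|\mathcal{F}_j|+|\mathcal{F}_\ell|\ge 2\lfloor(2k-1)/3\rfloor$ rectangles carrying at most $k-1$ colours, which gives the bound immediately. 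Having already identified the $k=2$ seed, scaling that gadget directly --- rather than wrapping it in an induction whose crucial construction you could not complete --- would have produced the intended proof.
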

\begin{proof} This is a generalisation of the construction that shows that $m_k^* =3$. Let $\mathcal{F}$ be a family of $2k-1$ rectangles partitioned into $3$ almost equal subfamilies, $\mathcal{F}_1$, $\mathcal{F}_2$ and $\mathcal{F}_3$ as follows.
	
	\begin{figure}[H]
		\centering{	
			\begin{tikzpicture}[x=1cm, y=0.5cm]
			\draw (1,0)--(1,1)--(5,1)--(5,0);
			\draw[dotted](1.25,0)--(1.25,1.25)--(5.25,1.25)--(5.25,0);
			\draw(1.5,0)--(1.5,1.5)--(5.5,1.5)--(5.5,0);

			\draw (0,0)--(0,3)--(4,3)--(4,0);
			\draw[dotted](0.25,0)--(0.25,3.25)--(4.25,3.25)--(4.25,0);
			\draw(0.5,0)--(0.5,3.5)--(4.5,3.5)--(4.5,0);
			
			\draw (2.5,0)--(2.5,5.5)--(6.5,5.5)--(6.5,0);
			\draw[dotted] (2.75,0)--(2.75,5.75)--(6.75,5.75)--(6.75,0);
			\draw (3,0)--(3,6)--(7,6)--(7,0);
			
			\node[above] at (0.25,3.25) {$\mathcal{F}_1 $};
			
			\node[above] at (1.25,1.25) {$\mathcal{F}_2 $};
			
			\node[above] at (6.75,1.75) {$\mathcal{F}_3$};
			
			\node[circle, fill=black,inner sep=0pt,minimum size=3pt,label=below:{\small$p_2$}] at (3.5,2.25) {};
			
			\node[circle, fill=black,inner sep=0pt,minimum size=3pt,label=below:{\small$p_1$}] at (4.75,0.5) {};
			
			\node[circle, fill=black,inner sep=0pt,minimum size=3pt,label=below:{\small$p_3$}] at (2,0.5) {};
			
			\end{tikzpicture}
			\caption{This construction shows that $m_k \geq 2 \lfloor \frac{2k-1}{3}\rfloor +1$.}	
		}
		
	\end{figure}
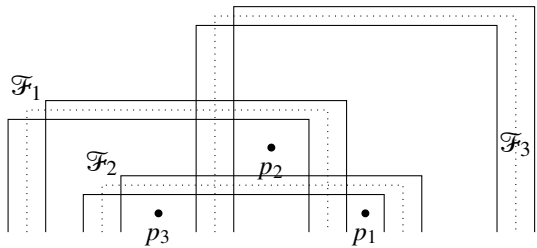
	
	Consider any $k$-colouring of $\mathcal{F}$. Some colour, say red, is used at most once, so it appears in at most one of $\mathcal{F}_1, \mathcal{F}_2$ and $\mathcal{F}_3$, say $\mathcal{F}_i$. Then the point $p_i$ is covered by the other two subfamilies, and no red rectangle.
	Since $\lfloor \frac{2k-1}{3}\rfloor \leq |\mathcal{F}_i| \leq \lceil \frac{2k-1}{3}\rceil$, this proves the lower bound.
\end{proof}

Note that the family in the figure does not contain any towers or nested sets. This gives a lower bound to complement \autoref{prop:stepspt}, namely that for steps, $m_k^* \geq  2 \lfloor \frac{2k-1}{3}\rfloor +1$.

\bibliographystyle{plain}

\end{document}